\newtheorem{theorem}{Theorem}[section]
\newtheorem{corollary}[theorem]{Corollary}
\newtheorem{lemma}[theorem]{Lemma}
\newtheorem{proposition}[theorem]{Proposition}
\theoremstyle{definition}
\newcommand{\htopol}{{\text{\rm h}}_{\text{\rm top}}}
\newcommand{\sep}{{\rm sep}}
\newcommand{\spn}{{\rm span}}
\newcommand{\cM}{{\mathcal M}}
\newcommand{\cP}{{\mathcal P}}
\newcommand{\cU}{{\mathcal U}}
\newcommand{\Nb}{{\mathbb N}}
\newcommand{\Rb}{{\mathbb R}}
\newcommand{\Zb}{{\mathbb Z}}
\begin{document}

\title{Garden of Eden and Specification}

\author{Hanfeng Li}

\address{\hskip-\parindent
Center of Mathematics, Chongqing University,
Chongqing 401331, China.
\break
Department of Mathematics, SUNY at Buffalo,
Buffalo, NY 14260-2900, U.S.A.}
\email{hfli@math.buffalo.edu}

\subjclass[2010]{37B05, 37B40, 37C29, 43A07, 22D45.}
\keywords{Garden of Eden theorem, Moore property, Myhill property, amenability, expansiveness, specification, algebraic action, completely positive entropy, homoclinicity, pre-injectivity, surjunctivity.}

\date{January 5, 2018}

\begin{abstract}
We establish a Garden of Eden theorem for expansive algebraic actions of amenable groups with the weak specification property, i.e. for any continuous equivariant map $T$ from the underlying space to itself, $T$ is pre-injective if and only if it is surjective. In particular, this applies to all expansive principal algebraic actions of amenable groups and expansive algebraic actions of $\Zb^d$ with CPE.
\end{abstract}

\maketitle

%\tableofcontents

%%%%%%%%%%%%%%%%%%%%%%%%%%%%%%%%%%%%%%%%%%%%%%%%%%%%%%%%%%%%%%%%%%%%%%%%%%%%%%%%%%%%%%%%%
\section{Introduction} \label{S-introduction}

Let a countable discrete group $\Gamma$ act on a compact metrizable space $X$ continuously and let $T$ be a $\Gamma$-equivariant continuous map $X\rightarrow X$. In this paper we consider the relation between the surjectivity of $T$ and a weak form of injectivity of $T$.

In the case $X=A^\Gamma$ for some finite set $A$ and $\Gamma$ acts on $A^\Gamma$ by shifts, $T$ is called a cellular automaton \cite{CC10} and can be thought of as an evolution determined  by a local rule. In this case $T$ is not surjective exactly when there is a $w\in A^F$ for some finite set $F\subseteq \Gamma$ such that $w$ is not equal to the restriction of any element of $T(A^\Gamma)$ on $F$. Such a $w$ is called a Garden of Eden (GOE) pattern, meaning that it could  appear only in the first of the sequence $A^\Gamma, T(A^\Gamma), T^2(A^\Gamma), \dots$.  A pair of mutually erasable patterns is a pair $(w_1, w_2)$  of distinct elements of $A^K$ for some finite set $K\subseteq \Gamma$ such that whenever $x_1$ and $x_2$ are elements of $A^\Gamma$ coinciding on $\Gamma\setminus K$ and extending $w_1$ and $w_2$ respectively, one has $T(x_1)=T(x_2)$.
In 1963, Moore \cite{Moore} showed that when $\Gamma=\Zb^d$, if there is a pair of mutually erasable patterns, then there is a GOE pattern.
Soon afterwards Myhill \cite{Myhill} proved the converse. The results of Moore and Myhill were extended to finitely generated groups with subexponential growth by Mach\`{\i} and Mignosi \cite{MM}, and then to all amenable groups by Ceccherini-Silberstein, Mach\`{\i}, and Scarabotti \cite{CMS}. On the other hand, Bartholdi \cite{Bartholdi10, Bartholdi16} showed that the results of Moore and Myhill fail for every nonamenable group.

For general actions of $\Gamma$ on a compact metrizable space $X$, one has the homoclinic equivalence relation defined on $X$ (see Section~\ref{SS-homoclinic}) and $T$ is called {\it pre-injective} if it is injective on each homoclinic equivalence class \cite{Gromov}. For the shift action $\Gamma\curvearrowright A^\Gamma$ with finite $A$, the map $T$ is not pre-injective exactly when there is a pair of mutually erasable patterns. Thus we say the action $\Gamma\curvearrowright X$ has {\it the Moore property} if surjectivity implies pre-injectivity for every $\Gamma$-equivariant continuous map $T:X\rightarrow X$, the action has {\it the Myhill property} if pre-injectivity implies surjectivity for every such $T$, and the action has the {\it Moore-Myhill property} if surjectivity is equivalent to pre-injectivity for every such $T$.
There has been quite some work trying to establish these properties for various actions \cite{CC12, CC15, CC16, CC17a, Fiorenzi00, Fiorenzi03, Gromov}; see also the book \cite{CC10} and the recent survey \cite{CC17b}.

We say an action $\Gamma\curvearrowright X$ is {\it surjunctive} if injectivity  implies surjectivity for every $\Gamma$-equivariant continuous map $T:X\rightarrow X$. Thus the Myhill property implies surjunctivity. Gottschalk's surjunctivity conjecture says that the shift action $\Gamma\curvearrowright A^\Gamma$  for finite $A$ is surjunctive for every group $\Gamma$ \cite{Got}. This was proved for sofic groups by Gromov \cite{Gromov, Weiss, CC10, KL}.

Specification plays a vital role in our work. It is a strong orbit tracing property introduced by Bowen for $\Zb$-actions \cite{Bowen} and extended to $\Zb^d$-actions by Ruelle \cite{Ruelle}. There are a few versions of the specification property \cite[Definition 5.1]{LS} \cite[Definition 6.1]{CL}. For our purpose, the weak specification property (see Section~\ref{SS-specification}) suffices. Our result regarding the Myhill property is the following.

\begin{theorem} \label{T-Myhill1}
Every expansive continuous action of a countable amenable group on a compact metrizable space with the weak specification property has the Myhill property. In particular, such actions are surjunctive.
\end{theorem}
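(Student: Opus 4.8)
The plan is to argue by contradiction through topological entropy, reducing the Myhill property to a statement about homoclinic pairs in the fibres of $T$. Assume $T$ is pre-injective; we must show $T$ is surjective, so suppose instead that $Z:=T(X)$ is a proper closed $\Gamma$-invariant subset of $X$. I will produce two distinct homoclinic points with the same image, contradicting pre-injectivity.

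The first step is an \emph{entropy gap}: $\htopol(Z)<\htopol(X)$, and this is where the weak specification property enters. By expansiveness the entropy map on invariant measures is upper semicontinuous, so $X$ carries a measure $\mu$ of maximal entropy; and by weak specification every measure of maximal entropy of $X$ has full support (the amenable-group form of Bowen's theorem, obtained from the fact that gluing orbit segments via specification lets one spread a near-maximal amount of entropy over any prescribed nonempty open region). Since $Z$ is proper and closed, no invariant measure carried by $Z$ can have full support, hence none is a measure of maximal entropy of $X$. As $Z$ is itself expansive, $\htopol(Z)$ is attained by some invariant measure $\nu$ supported on $Z$, and therefore $\htopol(Z)=h_\nu(Z)=h_\nu(X)<\htopol(X)$.

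The second step passes from the entropy gap to a homoclinic pair. Regard $T$ as a factor map $(X,\mu)\to(Z,T_*\mu)$, where $\mu$ is a measure of maximal entropy of $X$. By the Abramov--Rokhlin formula for amenable group actions, $h_\mu(X)=h_{T_*\mu}(Z)+h_\mu(X\mid T)$, and since $h_{T_*\mu}(Z)\le\htopol(Z)<\htopol(X)=h_\mu(X)$, the relative entropy $h_\mu(X\mid T)$ is strictly positive. I would then appeal to a relative, amenable-group analogue of the Blanchard--Host--Ruette theorem: positive relative entropy yields, for $\mu$-almost every $x$, a point $x'\neq x$ with $Tx'=Tx$ and $d(gx,gx')\to 0$ as $g\to\infty$ in $\Gamma$, i.e. $x'$ homoclinic to $x$. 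Such a pair shows $T$ is not injective on the homoclinic class of $x$, contradicting pre-injectivity. This gives the Myhill property, and since injectivity implies pre-injectivity, surjunctivity follows.

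The main obstacle is the second step: extracting a genuine homoclinic pair (not merely a pair asymptotic in one direction, nor merely an \emph{approximately} homoclinic pair) out of positive relative entropy in the amenable-group setting. A naive attempt to build many pairwise homoclinic points directly, by gluing translates of a reference point via weak specification, seems to fail: weak specification leaves an uncontrolled ``gap'' at every scale, so one cannot force agreement with a fixed reference point all the way out to infinity, only up to a fixed error on a fixed-width window. This is precisely why it is cleaner to route through the relative Blanchard--Host--Ruette phenomenon, which manufactures honest homoclinic pairs from entropy even when they are not visibly present. The first step is comparatively routine but still requires the amenable analogues of classical facts (upper semicontinuity of entropy for expansive actions, and full support of the measure of maximal entropy under weak specification).
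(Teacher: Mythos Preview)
Your two–step outline (entropy gap, then a homoclinic pair in a fibre of $T$) is exactly the structure of the paper's proof, but the second step as you have it is a genuine gap, and your reason for abandoning the direct construction is mistaken.

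For the entropy gap, the paper does not pass through measures of maximal entropy at all: it proves $\htopol(Z)<\htopol(X)$ by a direct separated/spanning set count (Proposition~\ref{P-subaction}). Your route via ``full support of the measure of maximal entropy under weak specification'' is plausible but requires establishing that Bowen-type statement for amenable groups; the paper's counting argument is more elementary and self-contained.

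The real issue is the second step. You appeal to a hoped-for relative, amenable Blanchard--Host--Ruette theorem producing \emph{homoclinic} (not merely one-sided asymptotic) pairs in fibres from positive relative entropy; no such result is available, and even for $\Gamma=\Zb$ the Blanchard--Host--Ruette mechanism gives asymptotic pairs, not homoclinic ones. More importantly, your dismissal of the direct specification approach overlooks two facts that make it work. First, weak specification (by compactness) allows \emph{infinite} index sets: given any finite $K\subseteq\Gamma$ and any $x\in X$, one may glue $x$ on $K$ to the fixed reference point $z\in\Xi$ on the cofinite set $\Gamma\setminus FK$, obtaining $x'$ with $\rho(sx',sz)\le\delta$ for all $s\notin FK$. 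Second, expansiveness upgrades this to genuine homoclinicity: if $\delta$ is below the expansive constant $\kappa$, then $\sup_{s\notin FK}\rho(sx',sz)\le\kappa$ forces $(x',z)$ to be homoclinic (this is \cite[Lemma 6.2]{CL}). So the ``uncontrolled gap'' you worry about is harmless. The paper (Proposition~\ref{P-factor}) then runs a pigeonhole argument: take a large $(\rho_{X,F_n},3\varepsilon)$-separated set in $X$, note that since $\htopol(Z)<\htopol(X)$ its $T$-images fall into far fewer $\rho_{Y,F_n}$-balls, replace each point by its homoclinic modification $x'\in\Xi$ as above, and conclude that two distinct such $x'$ share the same $T$-image. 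This is exactly the ``direct'' construction you thought would fail.
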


Theorem~\ref{T-Myhill1} strengthens a few known results and is new even for the case $\Gamma=\Zb$. Fiorenzi \cite[Corollary 4.8]{Fiorenzi03} proved Theorem~\ref{T-Myhill1} under the further assumption of subshifts of finite type. (Actually Fiorenzi stated her result for strongly irreducible subshifts of finite type, but strongly irreducible subshifts are exactly subshifts with the weak specification property, see Appendix~\ref{S-subshifts}.) Ceccherini-Silberstein and Coornaert proved Theorem~\ref{T-Myhill1} under the further assumption of subshifts \cite[Theorem 1.1]{CC12} and under the further assumption of a uniformly bounded-to-one factor of a weak specification subshift \cite[Theorem 1.1]{CC15}.

Algebraic actions are actions of $\Gamma$ on compact metrizable abelian groups by automorphisms. The algebraic actions of $\Zb^d$ were studied extensively in 1990s, and the last decade has seen much progress towards understanding the algebraic actions of nonabelian groups; see \cite{Schmidt, KL} and the references therein. Our result concerning the Moore property is the following.

\begin{theorem} \label{T-Moore1}
Every expansive algebraic action of a countable amenable group with completely positive entropy with respect to the normalized Haar measure has the Moore property.
\end{theorem}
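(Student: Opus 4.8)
The plan is to argue by contraposition: suppose $T\colon X\to X$ is continuous and $\Gamma$-equivariant but not pre-injective; I will show $T$ is not surjective. Since $X$ is compact, $T(X)$ is a closed $\Gamma$-invariant subset, so it is enough to prove the strict inequality $\htopol(T(X))<\htopol(X)$. Here $\htopol(X)$ is finite (expansive algebraic actions have finite entropy) and strictly positive, since by Chung--Li completely positive entropy of an expansive algebraic action forces the homoclinic group $\Delta(X)$ to be dense, in particular nontrivial. As a warm-up that indicates the shape of the argument, note that if $T$ were a group homomorphism the conclusion would be immediate without using completely positive entropy: surjectivity gives $X\cong X/\ker T$ as $\Gamma$-systems, so the Yuzvinskii addition formula (valid for algebraic actions of countable amenable groups) yields $\htopol(\ker T)=0$; hence $\Delta(\ker T)=\ker T\cap\Delta(X)=0$ (by Chung--Li, an expansive algebraic action with nontrivial homoclinic group has positive entropy), i.e.\ $T$ is pre-injective. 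The point of the theorem is the general, non-affine case, and this is exactly where completely positive entropy is used.

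Since $T$ fails to be pre-injective it identifies two distinct points $x_1,x_2$ of one homoclinic class, so $z:=x_1-x_2$ is a nonzero element of $\Delta(X)$ (a countable group, as the action is expansive). The heart of the proof is to \emph{propagate} this single collapse. Consider the continuous map $\Phi\colon X\to X$, $\Phi(y)=T(y)-T(y+z)$. Since $\gamma z\to 0$ and $T$ is uniformly continuous, $\gamma\Phi(y)=T(\gamma y)-T(\gamma y+\gamma z)\to 0$, so in fact $\Phi$ takes values in the countable set $\Delta(X)$, and $\Phi(x_2)=T(x_2)-T(x_1)=0$. Thus $X$ is a countable union of the closed level sets $\Phi^{-1}(v)$, $v\in\Phi(X)\subseteq\Delta(X)$, and by the Baire category theorem one of them, say $\Phi^{-1}(v_0)$, has nonempty interior $U$; on $U$ we have $T(y+z)=T(y)-v_0$ for all $y$. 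When $v_0=0$ this is already a genuine collapse on the open set $U$; when $X$ is connected $\Phi$ is constant, so $v_0=\Phi(x_2)=0$ and in fact $U=X$. The delicate case is that of a ``shifted collapse'' with $v_0\neq 0$: here completely positive entropy --- via the density of $\Delta(X)$ and positivity of entropy of all orders --- is what must be invoked to either rule this out or to combine $\Gamma$-translates of the relation $T(y+\gamma z)=T(y)-\gamma v_0$ (on $\gamma U$) into an honest identification $T(y+z')=T(y)$, with $y$ in some nonempty open set, for a suitable nonzero homoclinic point $z'$. Using equivariance this collapse then also holds, with $\gamma U$ and $\gamma z'$ in place of $U$ and $z'$, for every $\gamma\in\Gamma$.

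Given such an open collapse, the strict entropy inequality follows by the Garden-of-Eden counting, adapted to amenable groups. Fix a F{\o}lner sequence $(F_n)$; using expansiveness, encode both the ``support'' of the homoclinic point $z'$ and the condition ``$\gamma^{-1}y\in U$'' within a fixed finite window $F_0$, and pack into each $F_n$ a positive proportion of pairwise disjoint translates $\gamma_iF_0$. For most points $y$ --- where ``most'' is measured by $m_X$, using that completely positive entropy provides enough independence for a positive exponential proportion of $F_n$-patterns to qualify --- a positive proportion of these translates satisfy $\gamma_i^{-1}y\in U$, and independently flipping $y\mapsto y+\gamma_iz'$ over any subset of them does not change $Ty$; this forces the fibres of $x\mapsto Tx|_{F_n}$ to have size at least $2^{c|F_n|}$ for a fixed $c>0$. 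Since $h_{m_X}(X)=\htopol(X)$, these estimates pass to topological entropy and give $\htopol(T(X))\le\htopol(X)-c<\htopol(X)$; in the connected case one may instead quote the addition formula for the quotient $X/\overline{\langle\Gamma z'\rangle}$ directly. The single main obstacle is the propagation step of the previous paragraph: for subshifts the classical route from a failure of pre-injectivity to mutually erasable patterns rests on the locality of the transition rule, and in the present generality completely positive entropy --- equivalently, density of the homoclinic group --- must be made to substitute for that locality, so that the collapse takes place on an open, hence $m_X$-positive, set rather than on a meagre, entropy-negligible one.
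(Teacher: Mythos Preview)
Your proposal identifies the right overall shape---turn a single homoclinic collapse into many via $\Gamma$-translates, then count---but the step you yourself flag as the ``single main obstacle'' is a genuine gap, not a detail. Concretely: from $T(y+z)=T(y)-v_0$ on an open set $U$ with $v_0\neq 0$, the translated relations $T(y+\gamma_i z)=T(y)-\gamma_i v_0$ on $\gamma_i U$ yield, for well-separated $\gamma_i$ and subsets $A$, only $T\bigl(y+\sum_{i\in A}\gamma_i z\bigr)=T(y)-\sum_{i\in A}\gamma_i v_0$. These images are pairwise \emph{distinct} (the $\gamma_i v_0$ are themselves separated once the $\gamma_i$ are far enough apart), so there is no fibre collapse and no entropy drop. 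Nothing in your sketch explains how CPE or density of $\Delta(X)$ converts this shifted relation into an honest identification $T(y+z')=T(y)$ on an open set; invoking ``positivity of entropy of all orders'' is not an argument here, and I do not see a way to make it one.

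The paper's proof avoids the Baire step entirely, and this is the key idea you are missing. It never seeks an exact collapse on an open set. Instead it keeps the single exact equality $T(x)=T(\omega)$ and uses (i) a translation-invariant metric with $\sum_{s\in\Gamma}\rho_X(s(\omega-x),0_X)<\infty$ (Lemma~\ref{L-metric}, available because $\Delta(X)=\Delta^1(X)$ for expansive algebraic actions) together with (ii) uniform continuity of $T$, to show that for $v$ with $sv$ close to $x$ at many well-separated $s\in F_n$, the perturbed points $v_A=v+\sum_{s\in A}s^{-1}(\omega-x)$ satisfy $\rho_Y(tTv_A,tTv)<\kappa$ for \emph{every} $t\in\Gamma$; expansiveness of the target then forces $Tv_A=Tv$ exactly. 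The supply of such $v$ comes from the mean ergodic theorem (CPE $\Rightarrow$ ergodicity of $\mu_X$), and the entropy bookkeeping uses surjectivity of $T$ to obtain $T_*\mu_X=\mu_Y$ (Lemma~\ref{L-measure}, via uniqueness of the measure of maximal entropy under CPE). Note in particular that the paper argues by contradiction from surjectivity, whereas your route tries to bound $\htopol(T(X))$ directly; since $T(X)$ need not be a subgroup when $T$ is not affine, your final counting paragraph would also require further care even if the propagation step were available.
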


If an  algebraic action of a countable  amenable group has the weak specification property, then it has completely positive entropy with respect to the normalized Haar measure (see Corollary~\ref{C-CPE}). For expansive algebraic actions, the converse is also true in the case $\Gamma=\Zb^d$ \cite[Theorem 5.2]{LS}, and is conjectured to hold for polycyclic-by-finite groups \cite[Conjecture 6.5, Theorem 1.2, Corollary 8.4]{CL}. The principal algebraic actions are the actions of the form $\Gamma\curvearrowright \widehat{\Zb\Gamma/\Zb\Gamma f}$ for $f$ in the integral group ring $\Zb\Gamma$ (see Section~\ref{SS-algebraic}).
By Lemma~\ref{L-principal} every expansive principal algebraic action has the weak specification property. Thus combining Theorems~\ref{T-Myhill1} and \ref{T-Moore1} we have

\begin{theorem} \label{T-MM}
Every expansive algebraic action of a countable amenable group with the weak specification property has the Moore-Myhill property. In particular, every expansive principal algebraic action of a countable amenable group and every expansive algebraic action of $\Zb^d$ with completely positive entropy with respect to the normalized Haar measure have the Moore-Myhill property.
\end{theorem}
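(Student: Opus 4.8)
The plan is to obtain Theorem~\ref{T-MM} by assembling the results established above. Recall that the Moore-Myhill property for an action $\Gamma\curvearrowright X$ asserts that surjectivity and pre-injectivity coincide for every $\Gamma$-equivariant continuous map $T\colon X\rightarrow X$; it therefore splits into the Myhill direction, that pre-injectivity implies surjectivity, and the Moore direction, that surjectivity implies pre-injectivity. I would prove the two directions separately and then combine them.

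For the general statement, let $\Gamma\curvearrowright X$ be an expansive algebraic action of a countable amenable group with the weak specification property. Since this is in particular an expansive continuous action on a compact metrizable space, Theorem~\ref{T-Myhill1} applies directly and yields the Myhill direction, together with surjunctivity. For the Moore direction I would first invoke Corollary~\ref{C-CPE}, which tells us that the weak specification property forces the action to have completely positive entropy with respect to the normalized Haar measure; with this established, Theorem~\ref{T-Moore1} applies and gives the Moore direction. Combining the two directions yields the Moore-Myhill property for $\Gamma\curvearrowright X$.

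It then remains to treat the two classes highlighted in the ``in particular'' clause. If $\Gamma\curvearrowright X$ is an expansive principal algebraic action of a countable amenable group, then by Lemma~\ref{L-principal} it has the weak specification property, so the general statement applies. If $\Gamma=\Zb^d$ and $\Gamma\curvearrowright X$ is an expansive algebraic action with completely positive entropy with respect to the normalized Haar measure, then by \cite[Theorem 5.2]{LS} this action has the weak specification property, so the general statement again applies; alternatively, in this case one may combine Theorem~\ref{T-Moore1}, which uses only the assumed completely positive entropy, with Theorem~\ref{T-Myhill1}, which uses the weak specification property furnished by \cite[Theorem 5.2]{LS}.

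Strictly speaking there is no genuine obstacle inside the proof of Theorem~\ref{T-MM} itself, which is a formal combination of Theorems~\ref{T-Myhill1} and \ref{T-Moore1} with Corollary~\ref{C-CPE}, Lemma~\ref{L-principal}, and \cite[Theorem 5.2]{LS}. The substantive difficulties lie one level down, in the two input theorems. I would expect Theorem~\ref{T-Myhill1} to be the harder of the two: turning a pre-injective $T$ into a surjective one should rest on an entropy or mean-dimension counting argument over F{\o}lner sets of $\Gamma$, in the spirit of the Ceccherini-Silberstein--Mach\`{\i}--Scarabotti treatment of cellular automata, with expansiveness used to reduce the dynamics to finitary data and the weak specification property used to splice partial orbits and bound the defect of surjectivity. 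The companion point, Theorem~\ref{T-Moore1}, should exploit completely positive entropy to exclude a surjective but non-pre-injective $T$, the idea being that non-pre-injectivity of an equivariant map on an expansive algebraic action would give rise to a nontrivial zero-entropy factor, which is incompatible with completely positive entropy.
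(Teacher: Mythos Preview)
Your proposal is correct and follows essentially the same route as the paper: Theorem~\ref{T-MM} is deduced by combining Theorems~\ref{T-Myhill1} and \ref{T-Moore1} via Corollary~\ref{C-CPE}, with Lemma~\ref{L-principal} and \cite[Theorem 5.2]{LS} handling the two special cases. Your closing speculation about the mechanism behind Theorem~\ref{T-Moore1} (a zero-entropy factor) does not match the paper's actual argument, which is a direct entropy-counting contradiction, but this is extraneous to the proof of Theorem~\ref{T-MM} itself.
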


Theorem~\ref{T-MM} applies to the shift actions $\Gamma\curvearrowright A^\Gamma$ for finite $A$, since one can identify $A^\Gamma$ with $\widehat{\Zb\Gamma/\Zb\Gamma f}$ for $f=|A|$. Previously, Ceccherini-Silberstein and Coornaert established the Moore-Myhill property for hyperbolic toral automorphisms \cite{CC16} and expansive principal algebraic actions of countable abelian groups with connected underlying space \cite{CC17a}.

We remark that even for $\Gamma=\Zb$, not every expansive action with the weak specification property has the Moore property. Indeed, Fiorenzi showed that the even shift in $\{0, 1\}^\Zb$ consisting of all elements with an even number of $0$'s between any two $1$'s does not have the Moore property \cite{Fiorenzi00}.

This paper is organized as follows. In Section~\ref{S-preliminaries} we recall some definitions. Theorems~\ref{T-Myhill1} and \ref{T-Moore1} are proved in Sections~\ref{S-Myhill} and \ref{S-Moore} respectively.  We study the implications of weak specification for combinatorial independence in Section~\ref{S-ind}. The equivalence of weak specification  and strong irreducibility for subshifts is proved in Appendix~\ref{S-subshifts}.

\noindent{\it Acknowledgements.}
The author was partially supported by NSF and NSFC grants. He is grateful to Michel Coornaert for helpful discussion, and to Tullio Ceccherini-Silberstein, David Kerr, Douglas Lind and Chung Nh\^{a}n Ph\'{u} for comments. He also thanks the referee for remarks.

%%%%%%%%%%%%%%%%%%%%%%%%%%%%%%%%%%%%%%%%%%%%%%%%%%%%%%%%%%%%%%%%%%%%%%%%%%%%%%%%%%%%%%%%%%%%%%%%%%%%%%%%%%%%
\section{preliminaries} \label{S-preliminaries}

In this section we recall some definitions and set up some notations. Throughout this paper $\Gamma$ will be  a countable discrete group with identity element $e_\Gamma$.

\subsection{Expansiveness and weak specification} \label{SS-specification}

Let $\Gamma$ act on a compact metrizable space $X$ continuously, and let $\rho$ be a compatible metric on $X$.
The action is called {\it expansive} if there is some $\kappa>0$ such that $\sup_{s\in \Gamma}\rho(sx, sy)>\kappa$ for all distinct $x, y\in X$. Such a $\kappa$ is called an {\it expansive constant} with respect to $\rho$.

The action $\Gamma\curvearrowright X$ is said to have {\it the weak specification property} \cite[Definition 6.1]{CL} if for any $\varepsilon>0$ there exists a nonempty symmetric finite subset $F$ of $\Gamma$ such that for any finite collection $\{F_j\}_{j\in J}$ of finite subsets of $\Gamma$ satisfying
$$ F F_i\cap F_j=\emptyset \mbox{ for all distinct } i, j\in J,$$
and any collection of points $\{x_j\}_{j\in J}$ in $X$, there exists a point $x\in X$ such that
$$\rho(sx, sx_j)\le \varepsilon \mbox{ for all } j\in J, s\in F_j.$$
Using the compactness of $X$, it is easy to see that when the action has the weak specification property, one  can actually allow $J$ and $F_j$ to be infinite. It is also easy to see that weak specification passes to factors.

\subsection{Group rings and algebraic actions} \label{SS-algebraic}

We refer the reader to \cite{KL, Schmidt} for detail on group rings and algebraic actions.
The {\it integral group ring} $\Zb\Gamma$ of $\Gamma$ is defined as the set of all finitely supported functions $\Gamma\rightarrow \Zb$, written as $\sum_{s\in \Gamma} f_s s$ with $f_s\in \Zb$ for all $s\in \Gamma$ and $f_s=0$ for all but finitely many $s$, with addition and multiplication given by
\begin{align*}
\sum_{s\in \Gamma} f_s s+\sum_{s\in \Gamma} g_s s&=\sum_{s\in \Gamma} (f_s+g_s)s,\\
(\sum_{s\in \Gamma}f_ss)(\sum_{t\in \Gamma}g_tt)&=\sum_{s, t\in \Gamma}f_sg_t(st).
\end{align*}
The group algebra $\ell^1(\Gamma)$ is the set of all functions $f: \Gamma\rightarrow \Rb$ satisfying $\sum_{s\in \Gamma}|f_s|<+\infty$, with addition and multiplication defined in the same way.

An action of $\Gamma$ on a compact metrizable abelian group by (continuous) automorphisms is called {\it an algebraic action}. Up to isomorphism, there is a natural one-to-one correspondence between algebraic actions of $\Gamma$ and countable left $\Zb\Gamma$-modules  as follows. For any algebraic action $\Gamma\curvearrowright X$, the Pontrjagin dual
$\widehat{X}$ consisting of all continuous group homomorphisms $X\rightarrow \Rb/\Zb$ is a countable abelian group and the action of $\Gamma$ on $X$ induces an action of $\Gamma$ on $\widehat{X}$ which makes $\widehat{X}$ into a left $\Zb\Gamma$-module. Conversely, for any countable left $\Zb\Gamma$-module $\cM$, the Pontrjagin dual $\widehat{\cM}$ consisting of all group homomorphisms $\cM\rightarrow \Rb/\Zb$ under the pointwise convergence topology forms a compact metrizable abelian group and the $\Zb\Gamma$-module structure of $\cM$ gives rise to an action of $\Gamma$ on $\cM$ which induces an action of $\Gamma$ on $\widehat{\cM}$ in turn.

For each $f\in \Zb\Gamma$, the associated algebraic action $\Gamma\curvearrowright \widehat{\Zb\Gamma/\Zb\Gamma f}$ is called a {\it principal algebraic action}.

\begin{lemma} \label{L-principal}
Every expansive principal algebraic action has the weak specification property.
\end{lemma}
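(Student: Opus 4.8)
The plan is to exploit the explicit description of expansive principal algebraic actions. Recall that it is known (see e.g. \cite{CL}) that the principal algebraic action $\Gamma\curvearrowright X_f$, where $X_f:=\widehat{\Zb\Gamma/\Zb\Gamma f}$, is expansive if and only if $f$ is invertible in $\ell^1(\Gamma)$; put $w:=f^{-1}\in\ell^1(\Gamma)$, so $wf=fw=e_\Gamma$ in $\ell^1(\Gamma)$. For $a\in\ell^1(\Gamma)$ and $x\in\ell^\infty(\Gamma,\Rb)$ define $a\diamond x\in\ell^\infty(\Gamma,\Rb)$ by $(a\diamond x)_s:=\sum_{t\in\Gamma}a_t\,x_{st}$; this series converges absolutely, $a\diamond(b\diamond x)=(ab)\diamond x$ and $e_\Gamma\diamond x=x$, and for finitely supported $a$ the operation $a\diamond{-}$ also makes sense on $(\Rb/\Zb)^\Gamma$. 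Via Pontryagin duality, $X_f$ is identified with the closed $\Gamma$-invariant subgroup $\{x\in(\Rb/\Zb)^\Gamma:f\diamond x=0\}$, carrying the shift action $(sx)_t=x_{s^{-1}t}$. Writing $\pi:\ell^\infty(\Gamma,\Rb)\to(\Rb/\Zb)^\Gamma$ for coordinatewise reduction, one checks, using $wf=e_\Gamma$ and the lift of $x$ into $[0,1)^\Gamma$, that every $x\in X_f$ equals $\pi(w\diamond y)$ for some $y\in\ell^\infty(\Gamma,\Zb)$ with $\|y\|_\infty\le\|f\|_1=:R$, and conversely $\pi(w\diamond y)\in X_f$ for every $y\in\ell^\infty(\Gamma,\Zb)$.

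Next I would set up two quantitative reductions. Fix the compatible metric $\rho(x,x')=\sum_{t\in\Gamma}a_t\|x_t-x'_t\|_{\Rb/\Zb}$ with $a_t>0$ and $\|a\|_1:=\sum_t a_t<\infty$. Given $\varepsilon>0$: (i) choose a finite $Q\subseteq\Gamma$ with $\sum_{t\notin Q}a_t<\varepsilon/2$ and set $\eta:=\varepsilon/(2\|a\|_1)$; then for $x,x'\in X_f$ and $s\in\Gamma$, if $\|x_v-x'_v\|_{\Rb/\Zb}\le\eta$ for all $v\in s^{-1}Q$ then $\rho(sx,sx')\le\varepsilon$ (split $\rho(sx,sx')=\sum_u a_{su}\|x_u-x'_u\|_{\Rb/\Zb}$ according to whether $su\in Q$). (ii) Using $w\in\ell^1(\Gamma)$, choose a finite $E\subseteq\Gamma$ with $\sum_{t\notin E}|w_t|<\eta/(2R)$; then for $y,y'\in\ell^\infty(\Gamma,\Zb)$ with $\|y\|_\infty,\|y'\|_\infty\le R$ and any $v\in\Gamma$, if $y=y'$ on $vE$ then $\|\pi(w\diamond y)_v-\pi(w\diamond y')_v\|_{\Rb/\Zb}\le\big|\sum_{t\notin E}w_t(y-y')_{vt}\big|\le 2R\sum_{t\notin E}|w_t|<\eta$.

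With these in hand, I would take $F:=(QEE^{-1}Q^{-1})\cup\{e_\Gamma\}$, which is a nonempty symmetric finite subset of $\Gamma$, and check it witnesses weak specification. Let $\{F_j\}_{j\in J}$ be finite subsets of $\Gamma$ with $FF_i\cap F_j=\emptyset$ for all distinct $i,j$, and let $x_j\in X_f$ be arbitrary; write $x_j=\pi(w\diamond y_j)$ with $y_j\in\ell^\infty(\Gamma,\Zb)$, $\|y_j\|_\infty\le R$, and put $V_j:=F_j^{-1}QE$. If $V_i\cap V_j\ne\emptyset$ for some distinct $i,j$, then $a^{-1}q_1e_1=b^{-1}q_2e_2$ for some $a\in F_i$, $b\in F_j$, $q_1,q_2\in Q$, $e_1,e_2\in E$, hence $b=(q_2e_2e_1^{-1}q_1^{-1})a\in(QEE^{-1}Q^{-1})F_i\subseteq FF_i$, contradicting $FF_i\cap F_j=\emptyset$; so the $V_j$ are pairwise disjoint. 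Define $y:=\sum_{j\in J}\mathbf{1}_{V_j}y_j\in\ell^\infty(\Gamma,\Zb)$ (well defined, with $\|y\|_\infty\le R$, since the $V_j$ are disjoint) and $x:=\pi(w\diamond y)\in X_f$. For every $j$ and every $v\in F_j^{-1}Q$ we have $vE\subseteq V_j$, so $y=y_j$ on $vE$, so $\|x_v-(x_j)_v\|_{\Rb/\Zb}<\eta$ by (ii); since $s^{-1}Q\subseteq F_j^{-1}Q$ for $s\in F_j$, reduction (i) yields $\rho(sx,sx_j)\le\varepsilon$ for all $j\in J$ and $s\in F_j$. This is exactly the weak specification property, and the construction in fact goes through verbatim when $J$ or the $F_j$ are infinite.

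There is no deep obstacle: the argument uses only the invertibility of $f$ in $\ell^1(\Gamma)$, the resulting $w\diamond y$-parametrization of $X_f$, and the $\ell^1$-decay of $w$. The point that needs care is the choice of $F$. Making $sx$ be $\rho$-close to $sx_j$ for $s\in F_j$ amounts to coordinatewise closeness on the translated window $F_j^{-1}Q$, and coordinatewise closeness there is guaranteed once the gluing function $y$ agrees with $y_j$ on the further thickening $F_j^{-1}QE$; because these windows are thickened on both sides while the separation hypothesis $FF_i\cap F_j=\emptyset$ is one-sided, $F$ must be taken large enough to absorb the combined two-sided thickening $QEE^{-1}Q^{-1}$. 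Everything else is routine estimation.
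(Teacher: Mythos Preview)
Your proof is correct. The paper's own proof is a two-line citation: it quotes \cite{DS} for the equivalence of expansiveness with invertibility of $f$ in $\ell^1(\Gamma)$, and then cites \cite{Ren} for the implication from $\ell^1$-invertibility to weak specification. You instead give a direct, self-contained argument: represent points of $X_f$ as $\pi(w\diamond y)$ with $y$ a bounded integer configuration, use the $\ell^1$-decay of $w=f^{-1}$ to show that modifying $y$ outside a window $vE$ perturbs $x_v$ only slightly, and then glue the $y_j$ along the disjoint thickenings $V_j=F_j^{-1}QE$. This is essentially the standard ``summable homoclinic point'' mechanism (indeed $\pi(w)$ generates $\Delta(X_f)$), and is presumably close to what \cite{Ren} does, but your write-up makes the role of the inverse $w$ completely explicit and avoids any black box. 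The only cosmetic point is that the identification of $\widehat{\Zb\Gamma/\Zb\Gamma f}$ with $\{x:f\diamond x=0\}$ depends on the Pontryagin-duality convention; with some conventions one gets $f^*$ in place of $f$, but since $f$ is $\ell^1$-invertible iff $f^*$ is, this does not affect the argument.
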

\begin{proof} For any $f\in \Zb\Gamma$, the principal algebraic action $\Gamma\curvearrowright \widehat{\Zb\Gamma/\Zb\Gamma f}$ is expansive exactly when $f$ is invertible in $\ell^1(\Gamma)$ \cite[Theorem 3.2]{DS}. If $f\in \Zb\Gamma$ is invertible in $\ell^1(\Gamma)$, then the principal algebraic action $\Gamma\curvearrowright \widehat{\Zb\Gamma/\Zb\Gamma f}$ has the weak specification property \cite[Theorem 1.2]{Ren}.
\end{proof}

\subsection{Homoclinic pairs} \label{SS-homoclinic}

Let $\Gamma$ act on a compact metrizable space $X$ continuously, and let $\rho$ be a compatible metric on $X$. We say a pair $(x, y)\in X^2$ is {\it homoclinic} or {\it asymptotic} if $\rho(sx, sy)\to 0$ as $\Gamma \ni s\to \infty$. The set of all homoclinic pairs is an  equivalence relation on $X$, and does not depend on the choice of $\rho$.
A map from $X$ to another space is called {\it pre-injective} if it is injective on every homoclinic equivalence class.

Now assume that $\Gamma\curvearrowright X$ is an algebraic action. We can always choose $\rho$ to be translation-invariant. It follows that the homoclinic equivalence class of the identity element $0_X$
is a $\Gamma$-invariant subgroup of $X$, which we shall denote by $\Delta(X)$. Furthermore, for any $x\in X$, its homoclinic equivalence class is exactly $x+\Delta(X)$. The following lemma  will be crucial for the proof of Theorem~\ref{T-Moore}.

\begin{lemma} \label{L-metric}
Let $\Gamma\curvearrowright X$ be an expansive algebraic action. Then there is a translation-invariant compatible metric $\rho$ on $X$ such that $\sum_{s\in \Gamma}\rho(sx, 0_X)<+\infty$ for all $x\in \Delta(X)$.
\end{lemma}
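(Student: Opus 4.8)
The plan is to realize $X$ equivariantly inside a shift space over $(\Rb/\Zb)^{\Gamma}$ and to build $\rho$ as a weighted, $\Gamma$-summable combination of coordinate distances, so that the required estimate for $\sum_{s}\rho(sx,0_X)$ reduces to a Tonelli computation. Since the action is expansive, $\widehat{X}$ is a finitely generated $\Zb\Gamma$-module (standard; see \cite{Schmidt} for $\Gamma=\Zb^d$ and \cite{KL} in general). Choosing generators $a_1,\dots,a_k$ of $\widehat{X}$ yields a surjective left $\Zb\Gamma$-module map $(\Zb\Gamma)^{\oplus k}\to\widehat{X}$, whose Pontrjagin dual is a $\Gamma$-equivariant topological group embedding $\iota\colon X\hookrightarrow\widehat{(\Zb\Gamma)^{\oplus k}}=\bigl((\Rb/\Zb)^{\Gamma}\bigr)^{k}$, with $\Gamma$ acting on the target by the (left) shift; explicitly $\iota(x)^{(i)}(\gamma)=a_i(\gamma^{-1}x)$, so that $\iota(sx)^{(i)}(\gamma)=\iota(x)^{(i)}(s^{-1}\gamma)$. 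Fix any family $(c_\gamma)_{\gamma\in\Gamma}$ of strictly positive reals with $\sum_{\gamma}c_\gamma<\infty$, and define
\[
\rho(x,y)=\sum_{i=1}^{k}\sum_{\gamma\in\Gamma}c_\gamma\,\bigl\|\iota(x)^{(i)}(\gamma)-\iota(y)^{(i)}(\gamma)\bigr\|_{\Rb/\Zb},
\]
where $\|t\|_{\Rb/\Zb}$ denotes the distance from $t$ to $\Zb$. Being the pullback along the topological group embedding $\iota$ of a translation-invariant metric on $\bigl((\Rb/\Zb)^{\Gamma}\bigr)^{k}$ that induces the product topology (here one uses positivity and summability of the $c_\gamma$ together with compactness of the factors), $\rho$ is a translation-invariant compatible metric on $X$.

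The key input is that homoclinic points of an expansive algebraic action are ``summable'': by the theory of $\ell^1$-homoclinic groups (see \cite{KL}, or \cite{CL}), for each $x\in\Delta(X)$ there exist $\tilde{x}^{(1)},\dots,\tilde{x}^{(k)}\in\ell^1(\Gamma,\Rb)$ with $\iota(x)^{(i)}(\gamma)\equiv\tilde{x}^{(i)}(\gamma)\pmod{\Zb}$ for all $i$ and all $\gamma$. Granting this, using $\|t\bmod\Zb\|_{\Rb/\Zb}\le|t|$ and interchanging the order of summation (all terms being nonnegative),
\begin{align*}
\sum_{s\in\Gamma}\rho(sx,0_X)
&=\sum_{s}\sum_{i}\sum_{\gamma}c_\gamma\bigl\|\iota(x)^{(i)}(s^{-1}\gamma)\bigr\|_{\Rb/\Zb}
\le\sum_{i}\sum_{\gamma}c_\gamma\sum_{s}\bigl|\tilde{x}^{(i)}(s^{-1}\gamma)\bigr|\\
&=\Bigl(\sum_{\gamma}c_\gamma\Bigr)\sum_{i=1}^{k}\bigl\|\tilde{x}^{(i)}\bigr\|_{\ell^1(\Gamma)}<\infty,
\end{align*}
since for each fixed $\gamma$ the substitution $s\mapsto s^{-1}\gamma$ gives $\sum_{s}|\tilde{x}^{(i)}(s^{-1}\gamma)|=\|\tilde{x}^{(i)}\|_{\ell^1(\Gamma)}$, and $k$ is finite. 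This is exactly the asserted estimate, completing the proof.

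I expect the only genuine obstacle to be the input invoked in the second paragraph, namely the identification of the homoclinic group of an expansive algebraic action with its $\ell^1$-homoclinic counterpart; everything else is bookkeeping. (Expansiveness by itself easily yields that $\Delta(X)$ is countable, but countability alone does not furnish a single metric summing over all homoclinic points simultaneously, so the $\ell^1$-summability of individual homoclinic points is really what is needed.) The remaining points to verify — the left/right conventions in the equivariant dual embedding, and that $\rho$ induces the original topology on $X$ — are routine.
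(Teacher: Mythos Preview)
Your proof is correct and follows essentially the same route as the paper. Both use that expansiveness forces $\widehat{X}$ to be a finitely generated $\Zb\Gamma$-module and that $\Delta(X)=\Delta^1(X)$ for expansive algebraic actions; the paper then simply cites \cite[Proposition~5.7]{CL} / \cite[Lemma~13.33]{KL} for the existence of $\rho$, whose proof is exactly the weighted-coordinate metric construction and Tonelli computation you have written out explicitly.
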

\begin{proof} Since the algebraic action $\Gamma\curvearrowright X$ is expansive, by \cite[Theorem 5.6]{CL} \cite[Theorem 13.31]{KL} we have $\Delta(X)=\Delta^1(X)$, where $\Delta^1(X)$ is the $1$-homoclinic group of $X$ defined in \cite[Definition 5.1]{CL} \cite[Definition 13.26]{KL}.

Again using the expansiveness of the algebraic action $\Gamma\curvearrowright X$, by
\cite[Proposition 2.2, Corollary 2.16]{Schmidt}  or \cite[Lemma 13.6]{KL} we know that $\widehat{X}$ is a finitely generated $\Zb\Gamma$-module. Then by \cite[Proposition 5.7]{CL} \cite[Lemma 13.33]{KL} there is a translation-invariant compatible metric $\rho$ on $X$ such that $\sum_{s\in \Gamma}\rho(sx, 0_X)<+\infty$ for all $x\in \Delta^1(X)$.
Consequently, $\sum_{s\in \Gamma}\rho(sx, 0_X)<+\infty$ for all $x\in \Delta(X)$.
\end{proof}

\subsection{Amenable groups and entropy} \label{SS-entropy}

We refer the reader to \cite{MO, CC10, KL} for details on amenable groups and the entropy theory of their actions.
A countable group $\Gamma$ is called {\it amenable} if it has a left F{\o}lner sequence, i.e. a sequence $\{F_n\}_{n\in \Nb}$ of nonempty finite subsets of $\Gamma$ satisfying
$$ \lim_{n\to \infty}\frac{|KF_n\Delta F_n|}{|F_n|}=0$$
for all nonempty finite subsets $K$ of $\Gamma$.

Let $\Gamma$ act on a compact metrizable space $X$ continuously. For a finite open cover $\cU$ of $X$, we denote by $N(\cU)$ the minimal number of elements of $\cU$ needed to cover $X$.
Then the limit $\lim_{n\to \infty}\frac{1}{|F_n|}\log N(\bigvee_{s\in F_n}s^{-1}\cU)$ exists and does not depend on the choice of the F{\o}lner sequence $\{F_n\}_{n\in \Nb}$. We denote this limit by $\htopol(\cU)$. The {\it topological entropy of the action $\Gamma\curvearrowright X$} is defined as
$$\htopol(X):=\sup_{\cU}\htopol(\cU),$$
where $\cU$ runs over all finite open covers of $X$.

Let $\rho$ be a compatible metric on $X$, and  let $\varepsilon>0$. A set $Z\subseteq X$ is called {\it $(\rho, \varepsilon)$-separated} if
$\rho(x, z)\ge \varepsilon$ for all distinct $x, z\in Z$. Denote by $\sep(X, \rho, \varepsilon)$ the maximal cardinality of $(\rho, \varepsilon)$-separated subsets of $X$.
A set $Z\subseteq X$ is called {\it $(\rho, \varepsilon)$-spanning} if
for any $x\in X$ there exists some $z\in Z$ with $\rho(x, z)< \varepsilon$. Denote by $\spn(X, \rho, \varepsilon)$ the minimal cardinality of $(\rho, \varepsilon)$-spanning subsets of $X$. For any nonempty finite subset $F$ of $\Gamma$, we define a new metric $\rho_F$ on $X$ by $\rho_F(x, y)=\max_{s\in F}\rho(sx, sy)$.

The case $\Gamma=\Zb$ of the following lemma is \cite[Theorem 7.11]{Walters}, whose proof extends to amenable group case easily.

\begin{lemma} \label{L-expansive}
Suppose that the action $\Gamma\curvearrowright X$ is expansive, and let $\kappa$ be an expansive constant with respect to a compatible metric $\rho$ on $X$. Then the following holds.
\begin{enumerate}
\item For any finite open cover $\cU$ of $X$ such that each item of $\cU$ has $\rho$-diameter at most $\kappa$, one has $\htopol(X)=\htopol(\cU)$.
\item For any $0<\varepsilon<\kappa/4$, one has
$$\htopol(X)=\lim_{n\to \infty}\frac{1}{|F_n|}\log \sep(X, \rho_{F_n}, \varepsilon)=\lim_{n\to \infty}\frac{1}{|F_n|}\log \spn(X, \rho_{F_n}, \varepsilon).$$
\end{enumerate}
\end{lemma}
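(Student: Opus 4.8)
The plan is to reduce both parts to the standard $\Zb$-case arguments (as in \cite[Theorem 7.11]{Walters}), the only adjustment being that F{\o}lner averages replace Ces\`aro averages, and expansiveness is used in the uniform form it takes via the Lebesgue number.

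For part (1), fix a finite open cover $\cU$ each of whose members has $\rho$-diameter at most $\kappa$. Since $\htopol(X)=\sup_\cV \htopol(\cV)$, it suffices to show $\htopol(\cV)\le \htopol(\cU)$ for an arbitrary finite open cover $\cV$. Let $\delta>0$ be a Lebesgue number for $\cV$. The key expansiveness input is the following uniform statement: there is a nonempty finite set $E\subseteq\Gamma$ such that for all $x,y\in X$, if $\rho(sx,sy)\le\kappa$ for every $s\in E$ then $\rho(x,y)<\delta$; equivalently, every set of $\rho_E$-diameter $\le\kappa$ has $\rho$-diameter $<\delta$. This is proved by a routine compactness argument from expansiveness: if not, one finds sequences $x_n\ne y_n$ with $\rho(sx_n,sy_n)\le\kappa$ for all $s\in E_n$, where $E_n$ exhausts $\Gamma$, and $\rho(x_n,y_n)\ge\delta$; passing to a convergent subsequence contradicts the existence of the expansive constant $\kappa$. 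Granting this, for any F{\o}lner set $F$ one has that each atom of $\bigvee_{s\in EF}s^{-1}\cU$ has $\rho_E$-small translates and hence refines $\bigvee_{s\in F}s^{-1}\cV$ after applying the above to each $sx$, $s\in F$; more precisely $\bigvee_{s\in EF}s^{-1}\cU$ refines $\bigvee_{s\in F}s^{-1}\cV$, so $N(\bigvee_{s\in F}s^{-1}\cV)\le N(\bigvee_{s\in EF}s^{-1}\cU)$. Taking $F=F_n$, dividing by $|F_n|$, using $|EF_n|/|F_n|\to 1$ (F{\o}lner), and letting $n\to\infty$ gives $\htopol(\cV)\le\htopol(\cU)$.

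For part (2), the inequalities $\spn(X,\rho_{F_n},\varepsilon)\le\sep(X,\rho_{F_n},\varepsilon)\le\spn(X,\rho_{F_n},\varepsilon/2)$ hold for purely metric reasons, so after taking logs, dividing by $|F_n|$, and letting $n\to\infty$ it is enough to compare the separated/spanning quantities with $\htopol(X)$. On one hand, if $\cU$ is a finite open cover by sets of $\rho$-diameter $<\varepsilon$, then any $(\rho_{F_n},\varepsilon)$-separated set meets each atom of $\bigvee_{s\in F_n}s^{-1}\cU$ at most once, so $\sep(X,\rho_{F_n},\varepsilon)\le N(\bigvee_{s\in F_n}s^{-1}\cU)$, whence $\limsup_n\frac1{|F_n|}\log\sep(X,\rho_{F_n},\varepsilon)\le\htopol(\cU)\le\htopol(X)$. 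On the other hand, fix a finite open cover $\cU$ with all diameters $\le\kappa$ (so part (1) applies and $\htopol(\cU)=\htopol(X)$), and let $\delta$ be a Lebesgue number for $\cU$. Given $\varepsilon<\kappa/4<\delta$ — more precisely, taking for $\cU$ a cover by $\rho$-balls of radius $\varepsilon<\kappa/4$, whose Lebesgue number we may take larger than a suitable $\varepsilon'$ — a $(\rho_{F_n},\varepsilon)$-spanning set gives, for each atom of $\bigvee_{s\in F_n}s^{-1}\cU$, a point within $\rho_{F_n}$-distance $\varepsilon$, and a standard counting shows $N(\bigvee_{s\in F_n}s^{-1}\cU)\le\spn(X,\rho_{F_n},\varepsilon)\cdot C$ for a constant independent of $n$; dividing by $|F_n|$ and letting $n\to\infty$ gives $\htopol(X)=\htopol(\cU)\le\liminf_n\frac1{|F_n|}\log\spn(X,\rho_{F_n},\varepsilon)$. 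Combining the two directions with the metric inequalities above yields the claimed equalities.

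The main obstacle is the uniform-expansiveness lemma used in part (1) — namely, upgrading the pointwise statement ``$\sup_{s\in\Gamma}\rho(sx,sy)>\kappa$ for $x\ne y$'' to ``there is a \emph{finite} $E\subseteq\Gamma$ with $\rho_E(x,y)>\kappa$ whenever $\rho(x,y)\ge\delta$'' — and, relatedly, arranging the bookkeeping so that the correcting factors $|EF_n|/|F_n|$ and the combinatorial constant $C$ are controlled uniformly in $n$. Both are handled by compactness of $X$ together with the F{\o}lner condition, exactly as in the $\Zb$-case; no new idea beyond replacing intervals $\{0,\dots,n-1\}$ by F{\o}lner sets is needed, which is why the lemma is stated without proof.
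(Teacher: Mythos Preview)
Your approach matches the paper's, which gives no proof and simply asserts that the argument of \cite[Theorem 7.11]{Walters} extends from $\Zb$ to amenable groups; your sketch of that extension---the uniform-expansiveness lemma via compactness, refinement of covers, and F{\o}lner bookkeeping---is the right outline, and part (1) is correctly argued.

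There is, however, a genuine slip in the lower bound of part (2). You write ``Given $\varepsilon<\kappa/4<\delta$'' where $\delta$ is the Lebesgue number of a cover $\cU$ with diameters $\le\kappa$, but there is no reason $\delta>\kappa/4$: the Lebesgue number of such a cover can be arbitrarily small. Your attempted repair, taking $\cU$ to be a cover by balls of radius $\varepsilon$, makes things worse: such a cover has Lebesgue number at most $2\varepsilon$, so a $(\rho_{F_n},\varepsilon)$-ball need not lie in a single atom of $\bigvee_{s\in F_n}s^{-1}\cU$, and the asserted bound $N(\bigvee_{s\in F_n}s^{-1}\cU)\le C\cdot\spn(X,\rho_{F_n},\varepsilon)$ does not follow for any $C$ independent of $n$. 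The fix is to take $\cU$ to be a cover by open balls of radius $2\varepsilon$ centered at the points of an $\varepsilon$-net: then each member has diameter $\le 4\varepsilon<\kappa$ (so part (1) gives $\htopol(\cU)=\htopol(X)$) and every $\rho$-ball of radius $\varepsilon$ lies inside some member of $\cU$, so each $(\rho_{F_n},\varepsilon)$-ball lies in a single atom of $\bigvee_{s\in F_n}s^{-1}\cU$ and one gets $N(\bigvee_{s\in F_n}s^{-1}\cU)\le\spn(X,\rho_{F_n},\varepsilon)$ with $C=1$. Alternatively, one can reuse the uniform-expansiveness lemma from part (1) to trade $F_n$ for the slightly smaller set $\bigcap_{e\in E}e^{-1}F_n$; you allude to this device in your final paragraph but do not actually deploy it in part (2).
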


Let $\mu$ be a $\Gamma$-invariant Borel probability measure on $X$. For each finite Borel partition $\cP$ of $X$, one defines the Shannon entropy
$$H_\mu(\cP)=\sum_{P\in \cP}-\mu(P)\log \mu(P),$$
where the convention is $0\log 0=0$, and the dynamical entropy
$$h_\mu(\cP)=\lim_{n\to \infty}\frac{1}{|F_n|}H_\mu(\bigvee_{s\in F_n}s^{-1}\cP).$$
The {\it measure entropy of the action $\Gamma\curvearrowright (X, \mu)$} is defined as
$$h_\mu(X):=\sup_\cP h_\mu(\cP)$$
for $\cP$ ranging over all finite Borel partitions of $X$.
The action $\Gamma\curvearrowright (X, \mu)$ is said to have {\it completely positive entropy} (CPE) if $h_\mu(\cP)>0$ for every finite Borel partition $\cP$  of $X$ with $H_\mu(\cP)>0$.

The variational principle says that $\htopol(X)=\sup_\mu h_\mu(X)$ for $\mu$ ranging over all $\Gamma$-invariant Borel probability measures of $X$.

%%%%%%%%%%%%%%%%%%%%%%%%%%%%%%%%%%%%%%%%%%%%%%%%%%%%%%%%%%%%%%%%%%%%%%%%%%%%%%%%%%%%%%%%%%%%%%%%%%%%%%%%%%%%%
\section{Myhill property} \label{S-Myhill}

In this section we prove Theorem~\ref{T-Myhill1}. Throughout this section, we let a countable amenable group $\Gamma$ act on compact metrizable spaces $X$ and $Y$ continuously, and fix a left F{\o}lner sequence $\{F_n\}_{n\in \Nb}$ for $\Gamma$.

\begin{proposition} \label{P-subaction}
Assume that the action $\Gamma\curvearrowright Y$ is expansive and has the weak specification property.
For any nonempty closed $\Gamma$-invariant subset $Z$ of $Y$ with $Z\neq Y$, we have $\htopol(Z)<\htopol(Y)$.
\end{proposition}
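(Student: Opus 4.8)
The plan is to exploit the weak specification property to show that any proper closed invariant subset $Z$ is "small enough" to sit with positive codimension in the separated-set count for $Y$. Concretely, fix an expansive constant $\kappa$ for $Y$ with respect to a compatible metric $\rho$, and pick $\varepsilon \in (0, \kappa/4)$. Since $Z \neq Y$ and $Z$ is closed, there is a point $y_0 \in Y$ with $\rho(y_0, Z) > 4\varepsilon$, say; by expansiveness we can moreover find a nonempty finite set $D \subseteq \Gamma$ and $\delta > 0$ so that $\rho_D(y, Z) > \delta$ fails to be improvable — more usefully, so that any point $y$ with $\rho_D(y, y_0)$ small is uniformly bounded away from $Z$ in the $\rho$-metric at some coordinate in $D$. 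The point is to produce, inside $Y$, many points that differ from every element of $Z$ on a controlled density of "blocks."

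The key step: apply weak specification with parameter $\varepsilon$ to obtain a symmetric finite set $F \subseteq \Gamma$. Given a Følner set $F_n$, choose a maximal subset $\{\gamma_1, \dots, \gamma_{m_n}\} \subseteq F_n$ that is $FD$-separated in the sense that the translates $FD\gamma_i$ (or rather $F\gamma_i D$, arranged to match the hypothesis $FF_i \cap F_j = \emptyset$) are pairwise disjoint and disjoint from $F_n^c$-boundary effects; a standard packing argument gives $m_n \geq c|F_n|$ for a constant $c>0$ depending only on $|FD|$ and the Følner property. For each subset $S \subseteq \{1, \dots, m_n\}$, use weak specification to glue: demand $\rho(s x, s y_0) \leq \varepsilon$ for $s \in D\gamma_i$ when $i \in S$, and $\rho(sx, s z_0) \leq \varepsilon$ on $D\gamma_i$ for $i \notin S$ (where $z_0 \in Z$ is a fixed reference point), plus $\rho(sx, s z_0) \le \varepsilon$ everywhere else. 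The resulting points $x_S$ all lie within $\varepsilon$ of some genuine point, and differ from each other at the $\gamma_i$-blocks: if $i$ lies in the symmetric difference of $S, S'$, then $\rho_{D\gamma_i}(x_S, x_{S'})$ is comparable to $\rho(y_0, Z) > 4\varepsilon$, so the $x_S$ form a $(\rho_{F_n'}, \varepsilon)$-separated family of size $2^{m_n}$ for a slightly enlarged Følner-type set. On the other hand each $x_S$ with $S = \emptyset$ would be $\varepsilon$-close to $Z$, and conversely — the crucial separation — any point that is $(\rho_{D\gamma_i}, 2\varepsilon)$-close to $Z$ for \emph{every} $i$ cannot be $\varepsilon$-close to $x_S$ for $S$ large. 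This lets us cover $Z$'s $\varepsilon$-neighborhood using spanning sets while the $x_S$ contribute genuinely new separated points, giving
$$\sep(Y, \rho_{F_n'}, \varepsilon) \geq 2^{m_n} \cdot \spn(Z, \rho_{F_n'}, 2\varepsilon) / (\text{error}),$$
or more cleanly a bound of the form $\sep(Y,\rho_{F_n},\varepsilon) \ge 2^{c|F_n|}\spn(Z,\rho_{F_n},\varepsilon)$ after absorbing the enlargement of $F_n$ into the Følner limit. Taking $\frac{1}{|F_n|}\log(\cdot)$ and using Lemma~\ref{L-expansive}(2) for both $Y$ and $Z$ yields $\htopol(Y) \geq c\log 2 + \htopol(Z)$, hence $\htopol(Z) < \htopol(Y)$.

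The main obstacle is the bookkeeping that makes the "$x_S$ are far from $Z$" claim precise and uniform: one must ensure that the specification-glued point $x_S$ genuinely records, at the block $D\gamma_i$ for $i\in S$, behavior that no point of $Z$ can exhibit, and that this persists in the $\rho_{F_n}$ metric rather than being washed out. This requires choosing $y_0$ and the finite "witness" set $D$ carefully — using expansiveness of $Y$ (not of $Z$) to convert the single inequality $\rho(y_0,Z)>0$ into a robust, translate-stable separation on a finite window — and then checking that when we superimpose the $y_0$-constraint on an $\varepsilon$-fraction of far-apart blocks, the global point still cannot be $2\varepsilon$-approximated block-by-block by elements of the closed invariant set $Z$ on those blocks. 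A secondary technical point is the packing estimate $m_n \gtrsim |F_n|$ with a constant independent of $n$, which is routine from the Følner condition applied to the finite set $FD$ (or $F_n \setminus (\text{interior of } F_n \text{ under } FD)$ having density $\to 1$), but must be stated so that the enlarged sets $F_n'$ are themselves a Følner sequence up to the error terms so that Lemma~\ref{L-expansive}(2) still applies.
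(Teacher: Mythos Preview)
Your strategy --- use weak specification to plant copies of a point $y_0\notin Z$ at a positive-density set of sites inside $F_n$ --- is the same as the paper's, but your counting has a genuine gap.

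As written, you fix a single reference point $z_0\in Z$ and build one point $x_S$ for each subset $S\subseteq\{1,\dots,m_n\}$. This family has only $2^{m_n}$ elements, so all it yields is $\sep(Y,\rho_{F_n},\varepsilon)\ge 2^{m_n}$, i.e.\ $\htopol(Y)\ge c\log 2$; this says nothing about $\htopol(Z)$. The multiplicative inequality you then assert, of the form $\sep(Y,\rho_{F_n},\varepsilon)\ge 2^{m_n}\cdot\spn(Z,\rho_{F_n},\varepsilon)$ up to error, is not justified by the construction: the remark about ``covering $Z$'s $\varepsilon$-neighborhood using spanning sets while the $x_S$ contribute genuinely new separated points'' gives at best an \emph{additive} bound $\sep(Y)\ge \spn(Z)+2^{m_n}$, which is useless after taking $\frac{1}{|F_n|}\log$. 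The missing idea is to let the background vary over a maximal separated subset of $Z$ rather than fixing $z_0$. In the paper's argument, for each subset $A$ of the site set $K_n$ and each $\omega$ in a maximal $(\rho_{F_n\setminus FA},\eta)$-separated subset $\Omega_A$ of $Z$, one glues $y_0$ at the sites of $A$ and $\omega$ on the complement; distinct $A$'s are separated at a site of $A\Delta A'$ (one glued point near $y_0$, the other near $Z$), while for fixed $A$ distinct $\omega$'s are separated on $F_n\setminus FA$. Combined with the elementary bound $\sep(Z,\rho_{F_n},2\eta)\le |\Omega_A|\cdot \sep(Z,\rho_F,\eta)^{|A|}$ this gives
\[
\sep(Y,\rho_{F_n},\eta/2)\ \ge\ \sum_{A\subseteq K_n}|\Omega_A|\ \ge\ \sep(Z,\rho_{F_n},2\eta)\bigl(1+\sep(Z,\rho_F,\eta)^{-1}\bigr)^{|K_n|},
\]
which is the multiplicative gain needed.

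A secondary point: the finite ``witness window'' $D$ is unnecessary and is the source of the indexing confusion you flag (``$FD\gamma_i$ or rather $F\gamma_i D$''). Since $Z$ is $\Gamma$-invariant, the single inequality $\rho(y_0,Z)>0$ already guarantees that at any site $s$ where the glued point is $\eta/4$-close to $y_0$ it is automatically $\eta/2$-far from $sz$ for every $z\in Z$; no expansiveness-based promotion to a window is required.
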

\begin{proof}
Take a compatible metric $\rho$ on $Y$. Let $\kappa>0$ be an expansive constant of $\Gamma \curvearrowright Y$ with respect to $\rho$.
Take $y_0\in Y\setminus Z$ and set $\eta=\min(\kappa/10, \rho(y_0, Z))>0$. By the weak specification property, there exists a symmetric finite set $F\subseteq \Gamma$ containing $e_\Gamma$ such that for any finite collection of finite subsets $\{K_j\}_{j\in J}$ of $\Gamma$ satisfying $FK_i\cap K_j=\emptyset$ for all distinct $i, j\in J$ and any collection $\{y_j\}_{j\in J}$ of points in $Y$, there exists $y\in Y$ such that $\rho(sy, sy_j)\le \eta/4$ for all $j\in J$ and $s\in K_j$.

Let $n\in \Nb$. Take a maximal set $K_n\subseteq F_n$ subject to the condition that for any distinct $s, t\in K_n$, one has $s\not \in Ft$. Then $FK_n\supseteq F_n$, and hence
$$|K_n|\ge |F_n|/|F|.$$
Let $A\subseteq K_n$. Take a $(\rho_{F_n\setminus (FA)}, \eta)$-spanning subset $W_{F_n\setminus (FA)}$ of $Z$ with cardinality $\spn(Z, \rho_{F_n\setminus (FA)}, \eta)$, and for each $s\in A$ take a $(\rho_{Fs}, \eta)$-spanning subset $W_{Fs}$ of $Z$ with cardinality $\spn(Z, \rho_{Fs}, \eta)=\spn(Z, \rho_F, \eta)$.
For each $z\in Z$, we can take $z_{F_n\setminus (FA)}\in W_{F_n\setminus (FA)}$ with $\rho_{F_n\setminus (FA)}(z, z_{F_n\setminus (FA)})<\eta$ and $z_{Fs}\in W_{Fs}$ with $\rho_{Fs}(z, z_{Fs})<\eta$ for each $s\in A$. For any $z, z'\in Z$, if $z_{F_n\setminus (FA)}=z_{F_n\setminus (FA)}'$ and $z_{Fs}=z_{Fs}'$ for all $s\in A$, then $\rho_{F_n}(z, z')<2\eta$. 
It follows that
\begin{align*}
 \sep(Z, \rho_{F_n}, 2\eta)&\le |W_{F_n\setminus (FA)}|\cdot \prod_{s\in A}|W_{Fs}|\\
 &= \spn(Z, \rho_{F_n\setminus (FA)}, \eta)\spn(Z, \rho_F, \eta)^{|A|}\\
&\le \sep(Z, \rho_{F_n\setminus (FA)}, \eta)\sep(Z, \rho_F, \eta)^{|A|}.
\end{align*}
Take a $(\rho_{F_n\setminus (FA)}, \eta)$-separated subset $\Omega_A$ of $Z$ with maximal cardinality.
 For each $\omega\in \Omega_A$, take $\omega_A\in Y$ such that
$\rho(s\omega_A, y_0)\le \eta/4$ for all $s\in A$ and $\rho(t\omega_A, t\omega)\le \eta/4$ for all $t\in F_n\setminus (FA)$.
For any distinct $\omega, \omega'\in \Omega_A$, we have  $\rho(t\omega, t\omega')\ge \eta$ for some $t\in F_n\setminus (FA)$, and hence
$$\rho(t\omega_A, t\omega'_A)\ge \rho(t\omega, t\omega')-\rho(t\omega, t\omega_A)-\rho(t\omega', t\omega'_A)\ge \eta-\eta/4-\eta/4=\eta/2.$$
For any distinct $A, B\subseteq K_n$ and any $\omega\in \Omega_A, \omega'\in \Omega_B$, say $s\in A\setminus B$, we have
$$ \rho(s\omega_A, s\omega'_B)\ge \rho(y_0, s\omega')-\rho(s\omega_A, y_0)-\rho(s\omega'_B, s\omega')\ge \eta-\eta/4-\eta/4=\eta/2.$$
Thus the set $\{\omega_A: A\subseteq K_n, \omega\in \Omega_A\}$ is a $(\rho_{F_n}, \eta/2)$-separated subset of $Y$ with cardinality $\sum_{A\subseteq K_n}\sep(Z, \rho_{F_n\setminus (FA)}, \eta)$.
Therefore
\begin{align*}
 \sep(Y, \rho_{F_n}, \eta/2)&\ge \sum_{A\subseteq K_n}\sep(Z, \rho_{F_n\setminus (FA)}, \eta)\\
 &\ge \sum_{A\subseteq K_n}\sep(Z, \rho_{F_n}, 2\eta)\sep(Z, \rho_F, \eta)^{-|A|}\\
 &=\sep(Z, \rho_{F_n}, 2\eta)(1+\sep(Z, \rho_F, \eta)^{-1})^{|K_n|}.
\end{align*}
Thus by Lemma~\ref{L-expansive} we have
\begin{align*}
\htopol(Y)&=\lim_{n\to \infty}\frac{1}{|F_n|}\log \sep(Y, \rho_{F_n}, \eta/2)\\
&\ge \lim_{n\to \infty}\frac{1}{|F_n|}\log\sep(Z, \rho_{F_n}, 2\eta)+ \limsup_{n\to \infty}\frac{1}{|F_n|}\log (1+\sep(Z, \rho_F, \eta)^{-1})^{|K_n|}\\
&\ge \htopol(Z)+\frac{1}{|F|}\log (1+\sep(Z, \rho_F, \eta)^{-1})\\
&> \htopol(Z)
\end{align*}
as desired.
\end{proof}

Proposition~\ref{P-subaction} was proved before under the further assumption that $Y$ is a subshift of finite type by Fiorenzi in the proof of \cite[Proposition 4.6]{Fiorenzi03}, and under the further assumption that $Y$ is a subshift by Ceccherini-Silberstein and Coornaert \cite[Proposition 4.2]{CC12}.

\begin{proposition} \label{P-factor}
Assume that the action $\Gamma\curvearrowright X$ is expansive and has the weak specification property. Let $\Gamma \curvearrowright Y$ be a factor of $\Gamma\curvearrowright X$ such that $\Gamma \curvearrowright Y$ is expansive and $\htopol(Y)<\htopol(X)$. Then for any homoclinic equivalence class $\Xi$ of $X$,  the factor map $T:X\rightarrow Y$ fails to be injective on $\Xi$.
\end{proposition}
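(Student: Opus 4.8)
The plan is to argue by contradiction: suppose $T$ is injective on some homoclinic equivalence class $\Xi$, and derive the entropy inequality $\htopol(Y) \ge \htopol(X)$, contradicting the hypothesis. The key observation is that $T$ being injective on $\Xi$ means that points of $X$ which agree with each other outside a large finite set and differ on that set get mapped by $T$ to distinct points of $Y$; combining this with the weak specification property for $\Gamma \curvearrowright X$ should let us "implant" many such finite perturbations independently, producing an exponentially large separated family in $X$ whose images remain separated in $Y$. More precisely, first fix a compatible metric $\rho_X$ on $X$ with expansive constant $\kappa_X$ and a compatible metric $\rho_Y$ on $Y$ with expansive constant $\kappa_Y$, and let $\Xi = x_0 + \Delta(X)$ if we are in the algebraic setting, or just an arbitrary homoclinic class in general. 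Pick two distinct points $x_1, x_2 \in \Xi$ with $x_1$ homoclinic to $x_2$; then $\rho_X(s x_1, s x_2) \to 0$ as $s \to \infty$, so there is a finite set $K \subseteq \Gamma$ outside of which $sx_1$ and $sx_2$ are very close, while on $K$ they are separated by expansiveness. Since $T$ is injective on $\Xi$, $Tx_1 \ne Tx_2$, so by expansiveness of $Y$ there is some $s_0 \in \Gamma$ with $\rho_Y(s_0 T x_1, s_0 T x_2) > \kappa_Y$; after translating we may assume this witness is recorded on a fixed finite set.

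The heart of the argument is the combinatorial implantation step. Using the weak specification property of $\Gamma \curvearrowright X$ with the $\varepsilon$ coming from the expansiveness data, produce a symmetric finite set $F \subseteq \Gamma$ as in Section~\ref{SS-specification}. For each $n$, take a maximal $F$-separated set $K_n \subseteq F_n$ in the sense of Proposition~\ref{P-subaction} (so $|K_n| \ge |F_n|/|F|$ and the translates $\{ FgK \}_{g \in K_n}$ are pairwise disjoint once $n$ is large enough relative to $K$), and for each subset $A \subseteq K_n$ and each choice of a large $(\rho_{X,F_n}, \varepsilon)$-separated set of "background" points, use weak specification to find a point $x_A \in X$ that looks like the chosen background point near $F_n \setminus F A K$ and looks like a translate $g \cdot x_{\iota(g)}$ (with $\iota(g) \in \{1,2\}$ encoding a binary string indexed by $A$) near each $Fg$, $g \in A$. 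The triangle-inequality bookkeeping, exactly parallel to Proposition~\ref{P-subaction}, shows these $x_A$ (over all $A$ and all background points) form a $(\rho_{X, F_n}, \varepsilon')$-separated subset of $X$. Then push forward by $T$: since $T$ is uniformly continuous and distinguishes $x_1$ from $x_2$ in a way detected by expansiveness of $Y$, the images $Tx_A$ are $(\rho_{Y, F_n}, \varepsilon'')$-separated in $Y$ for suitable $\varepsilon'' > 0$. Counting as in Proposition~\ref{P-subaction} gives
\[
\sep(Y, \rho_{Y,F_n}, \varepsilon'') \ge \sep\bigl(X, \rho_{X,F_n}, \varepsilon\bigr) \cdot \bigl(1 + o(1)\bigr)^{|K_n|}\cdot 2^{|K_n|}\big/(\text{correction}),
\]
and taking $\tfrac{1}{|F_n|}\log(\cdot)$ and letting $n \to \infty$, Lemma~\ref{L-expansive} yields $\htopol(Y) \ge \htopol(X)$, the desired contradiction.

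The main obstacle I expect is the careful synchronization of the metric estimates across the factor map $T$: the separation of $x_1$ and $x_2$ under $T$ is recorded at some single group element $s_0$, and one must ensure that when these local patterns are implanted at many disjoint translates $Fg$ ($g \in A$) simultaneously, the perturbation introduced by weak specification (of size $\varepsilon$ in $X$) does not, after applying $T$ and passing to $\rho_{Y, F_n}$, wash out the $\kappa_Y$-sized gap at each implantation site. This forces a somewhat delicate choice of constants — $\varepsilon$ in $X$ must be small enough that $T(B_\varepsilon)$ has $\rho_Y$-diameter well below $\kappa_Y/10$, and $F$ must be taken large enough to contain the witnessing element $s_0$ for the pair $(Tx_1, Tx_2)$ and a suitable neighborhood of $K$ — but once the constants are fixed correctly the counting is routine and mirrors the proof of Proposition~\ref{P-subaction}. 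A secondary point to be careful about is that weak specification is only assumed for $X$, not $Y$, so all the implantation must be done upstairs and only the separation is checked downstairs.
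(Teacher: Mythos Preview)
Your argument has a genuine gap at the step where you claim the images $Tx_A$ are $(\rho_{Y,F_n},\varepsilon'')$-separated. The binary implantation part is defensible: if two configurations differ at some site $g$, then uniform continuity of $T$ together with $\rho_Y(s_0Tx_1,s_0Tx_2)>\kappa_Y$ does produce separation of the images near $s_0g$. But when two configurations carry the same binary data and differ only in the ``background'' point $\omega$, nothing forces their $T$-images apart. Your hypothesis is only that $T$ is injective on the single class $\Xi$, and the points $x_A$ you build are \emph{not} in $\Xi$: on most of $F_n$ they agree with an arbitrary $\omega\in X$, not with the base point of $\Xi$. A factor map with $\htopol(Y)<\htopol(X)$ must collapse most $(\rho_{X,F_n},\varepsilon)$-separated pairs to $\rho_{Y,F_n}$-close images --- that is exactly what the entropy drop means --- so your displayed inequality cannot hold. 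At best the count yields $\sep(Y,\rho_{Y,F_n},\varepsilon'')\gtrsim 2^{|K_n|}$, hence only $\htopol(Y)>0$, not $\htopol(Y)\ge\htopol(X)$. (There is also a preliminary gap: you assume $\Xi$ contains two distinct points $x_1\ne x_2$, which is not given and is in fact part of what the proposition asserts.)

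The paper's proof, and the natural repair of yours, is to place the background points themselves into $\Xi$. Fix $z\in\Xi$, take a maximal $(\rho_{X,F_n},3\varepsilon)$-separated set $\Omega_n\subseteq X$, and for each $x\in\Omega_n$ use weak specification to produce $x'$ shadowing $x$ on $F_n$ and shadowing $z$ on $\Gamma\setminus FF_n$; then $x'\in\Xi$ and the $x'$ remain pairwise separated in $X$. The paper now exploits the entropy gap directly: since $|\Omega_n|$ exceeds a minimal $(\rho_{Y,F_n},\varepsilon)$-spanning set of $Y$ by a factor $e^{\eta|F_n|}$, pigeonhole yields many $x'$ whose $T$-images are $2\varepsilon$-close on $F_n$; they are also $\varepsilon$-close to $Tz$ outside $FF_n$, and a second pigeonhole on the thin annulus $FF_n\setminus F_n$ makes two of them close on all of $\Gamma$, hence equal by expansiveness of $Y$. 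In your contrapositive language: if $T$ were injective on $\Xi$, then the distinct $Tx'$, being close to $Tz$ off $FF_n$, would by expansiveness of $Y$ have to be $\kappa$-separated somewhere \emph{inside} $FF_n$, giving $\sep(Y,\rho_{Y,FF_n},\kappa)\ge|\Omega_n|$ and hence $\htopol(Y)\ge\htopol(X)$. Either way, once the backgrounds live in $\Xi$ the binary implantation of $x_1,x_2$ is unnecessary.
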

\begin{proof}
Take compatible metrics $\rho_X$ and $\rho_Y$ on $X$ and $Y$ respectively.
Take a common expansive constant $\kappa>0$ for the action $\Gamma\curvearrowright X$ with respect to $\rho_X$ and the action $\Gamma\curvearrowright Y$ with respect to $\rho_Y$.
Write $\varepsilon=\kappa/15$.
As $T$ is continuous and $X$ is compact, there exists $0<\delta<\varepsilon$ such that for any $x, x'\in X$ with $\rho_X(x, x')\le \delta$ one has $\rho_Y(Tx, Tx')\le \varepsilon$.

Fix a point $z\in \Xi$. Since the action $\Gamma\curvearrowright X$ has the weak specification property,   there exists a symmetric finite set $F\subseteq \Gamma$ containing $e_\Gamma$ such that for any
$x\in X$ and any finite set $K\subseteq \Gamma$ there exists $x'\in X$ with $\max_{s\in K}\rho_X(sx', sx)\le \delta$ and $\sup_{s\in \Gamma\setminus (FK)}\rho_X(sx', sz)\le \delta$.
Set $C=\spn(Y, \rho_Y, \varepsilon)$.

By Lemma~\ref{L-expansive} we have
\begin{align*}
\htopol(X)=\lim_{n\to \infty}\frac{1}{|F_n|}\log \sep(X, \rho_{X, F_n}, 3\varepsilon),
\end{align*}
and
\begin{align*}
\htopol(Y)=
\lim_{n\to \infty}\frac{1}{|F_n|}\log \spn(Y, \rho_{Y, F_n}, \varepsilon).
\end{align*}

Take $\eta>0$ with $\htopol(X)>3\eta+\htopol(Y)$. When $n$ is large enough, we have
$\spn(Y, \rho_{Y, F_n}, \varepsilon)\le e^{|F_n|(\htopol(Y)+\eta)}$ and
$$\sep(X, \rho_{X, F_n}, 3\varepsilon)\ge e^{|F_n|(\htopol(X)-\eta)}\ge  e^{\eta |F_n|}\spn(Y, \rho_{Y, F_n}, \varepsilon).$$
Let $\Omega_n$ be a $(\rho_{X, F_n}, 3\varepsilon)$-separated subset of $X$ with maximum cardinality, and $\Lambda_n$ be a $(\rho_{Y, F_n}, \varepsilon)$-spanning subset of $Y$ with minimal cardinality. Then there exist a set $\Omega'_n\subseteq \Omega_n$ with $|\Omega'_n|\ge e^{\eta |F_n|}$ and $y\in \Lambda_n$ such that $\max_{s\in F_n}\rho_Y(sTx, sy)\le \varepsilon$ for all $x\in \Omega'_n$.
For each $x\in \Omega'_n$, take $x'\in X$
with $\max_{s\in F_n}\rho_X(sx', sx)\le \delta<\varepsilon$ and $\sup_{s\in \Gamma\setminus (FF_n)}\rho_X(sx', sz)\le \delta$.
Then $\sup_{s\in \Gamma\setminus (FF_n)}\rho_X(sx', sz)\le \kappa$, and hence by \cite[Lemma 6.2]{CL} the pair $(x', z)$ is homoclinic.  Thus $x'\in \Xi$.
By our choice of $\delta$, we get
\begin{align*}
\max_{s\in F_n}\rho_Y(sTx', sy)&\le \max_{s\in F_n}\rho_Y(sTx', sTx)+\max_{s\in F_n}\rho_Y(sTx, sy)\\
&\le  \max_{s\in F_n}\rho_Y(T(sx'), T(sx))+\varepsilon\\
&\le \varepsilon+\varepsilon=2\varepsilon,
\end{align*}
and
$$\sup_{s\in \Gamma\setminus (FF_n)}\rho_Y(sTx', sTz)=\sup_{s\in \Gamma\setminus (FF_n)}\rho_Y(T(sx'), T(sz))\le \varepsilon.$$
 Now take a set $\Omega''_n\subseteq \Omega'_n$ with $|\Omega''_n|\ge |\Omega'_n|/C^{|FF_n\setminus F_n|}$ such that for any $x, \omega\in \Omega''_n$ we have $\max_{s\in FF_n\setminus F_n}\rho_Y(sTx', sT\omega')<2\varepsilon$. Then for any $x, \omega\in \Omega''_n$ we have
$\sup_{s\in \Gamma}\rho_Y(sTx', sT\omega')\le 4\varepsilon$, whence $Tx'=T\omega'$.
If $x\neq \omega$, then
$$ \max_{s\in F_n}\rho_X(sx', s\omega')\ge \max_{s\in F_n}\rho_X(sx, s\omega)-\max_{s\in F_n}\rho_X(sx, sx')-\max_{s\in F_n}\rho_X(s\omega, s\omega')>\varepsilon,$$
and hence $x'\neq \omega'$.
When $n$ is large enough, we have $C^{|FF_n\setminus F_n|}<e^{\eta |F_n|}$, and hence $|\Omega''_n|>1$.
\end{proof}

Proposition~\ref{P-factor} was proved before under the further assumption that $X$ is a subshift of finite type and  $Y$ is a subshift by Fiorenzi \cite[Proposition 4.5]{Fiorenzi03},
and under the further assumption that $X$ and $Y$ are subshifts by Ceccherini-Silberstein and Coornaert \cite[Theorem 5.1]{CC12}.

From Propositions~\ref{P-subaction} and \ref{P-factor} we obtain:

\begin{theorem} \label{T-Myhill}
Let $\Gamma\curvearrowright X$ and $\Gamma\curvearrowright Y$ be expansive actions with the weak specification property. Assume that $\htopol(X)=\htopol(Y)$. Then every pre-injective continuous $\Gamma$-equivariant map $X\rightarrow Y$ is surjective.
\end{theorem}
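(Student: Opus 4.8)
The plan is to argue by contradiction: assume $T\colon X\to Y$ is pre-injective but fails to be surjective, and show that then $T$ cannot be injective on any homoclinic equivalence class of $X$, contradicting pre-injectivity. Everything will be assembled from Propositions~\ref{P-subaction} and \ref{P-factor}, so no new estimates are needed.

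First I would set $Z:=T(X)$. Since $X$ is compact and $T$ is continuous, $Z$ is a nonempty closed subset of $Y$, and since $T$ is $\Gamma$-equivariant, $Z$ is $\Gamma$-invariant. Failure of surjectivity means $Z\neq Y$. Proposition~\ref{P-subaction}, applied to the expansive action $\Gamma\curvearrowright Y$ with the weak specification property and its proper nonempty closed invariant subset $Z$, then gives $\htopol(Z)<\htopol(Y)$. Combined with the hypothesis $\htopol(X)=\htopol(Y)$, this yields $\htopol(Z)<\htopol(X)$.

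Next I would record that $\Gamma\curvearrowright Z$ is a factor of $\Gamma\curvearrowright X$: the corestriction $X\to Z$ of $T$ is a surjective continuous $\Gamma$-equivariant map. Moreover $\Gamma\curvearrowright Z$ is expansive, being the restriction of the expansive action $\Gamma\curvearrowright Y$ to the closed invariant set $Z$ (any expansive constant for $Y$ with respect to a compatible metric is also one for $Z$ with respect to the restricted metric). Thus the hypotheses of Proposition~\ref{P-factor} hold with the ambient factor taken to be $Z$ in place of $Y$: the action $\Gamma\curvearrowright X$ is expansive with the weak specification property, $\Gamma\curvearrowright Z$ is an expansive factor of it, and $\htopol(Z)<\htopol(X)$.

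Applying Proposition~\ref{P-factor}, for every homoclinic equivalence class $\Xi$ of $X$ the factor map $X\to Z$ fails to be injective on $\Xi$; since that map is simply $T$ with its codomain restricted, $T\colon X\to Y$ also fails to be injective on $\Xi$. Hence $T$ is not pre-injective, contradicting our assumption, so $T$ must be surjective. The argument is essentially bookkeeping once the two propositions are available; the only points needing (minor) care are verifying that $Z=T(X)$ is a closed $\Gamma$-invariant subset of $Y$ and that the induced subaction on it is again expansive, so that Propositions~\ref{P-subaction} and \ref{P-factor} genuinely apply.
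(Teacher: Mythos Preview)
Your proof is correct and follows exactly the approach the paper uses: the paper simply states that Theorem~\ref{T-Myhill} follows from Propositions~\ref{P-subaction} and \ref{P-factor}, and your argument spells out precisely how---applying Proposition~\ref{P-subaction} to $Z=T(X)\subsetneq Y$ to get $\htopol(Z)<\htopol(X)$, then Proposition~\ref{P-factor} to the expansive factor $X\to Z$ to contradict pre-injectivity.
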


Theorem~\ref{T-Myhill} was proved before under the further assumption that $X$ is a subshift of finite type and  $Y$ is a subshift by Fiorenzi \cite[Theorem 4.7]{Fiorenzi03},
and under the further assumption that $X$ and $Y$ are subshifts by Ceccherini-Silberstein and Coornaert \cite[Corollary 5.2]{CC12}.

Theorem~\ref{T-Myhill1} follows from Theorem~\ref{T-Myhill} directly.

%%%%%%%%%%%%%%%%%%%%%%%%%%%%%%%%%%%%%%%%%%%%%%%%%%%%%%%%%%%%%%%%%%%%%%%%%%%%%%%%%%%%%%%%%%%%%%%%%%%%%%%%%%%%
\section{Moore Property} \label{S-Moore}

In this section we prove Theorem~\ref{T-Moore1}. Throughout this section $\Gamma$ will be a countable amenable group and $\{F_n\}_{n\in \Nb}$ will be a left F{\o}lner sequence of $\Gamma$.

\begin{lemma} \label{L-measure}
Let $\Gamma$ act on compact metrizable groups $X$ and $Y$ by automorphisms. Denote by $\mu_X$ and $\mu_Y$ the normalized Haar measure of $X$ and $Y$ respectively. Suppose that the action $\Gamma\curvearrowright (X, \mu_X)$ has CPE. Also assume that $\htopol(X)=\htopol(Y)<+\infty$. Let $T: X\rightarrow Y$ be a $\Gamma$-equivariant continuous surjective map. Then $T_*\mu_X=\mu_Y$.
\end{lemma}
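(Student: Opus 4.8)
The plan is to show that $T_*\mu_X$, which is automatically a $\Gamma$-invariant Borel probability measure on $Y$, must equal the Haar measure $\mu_Y$ by showing it has maximal entropy together with a rigidity argument coming from the group structure. First I would record that since $T$ is a continuous $\Gamma$-equivariant surjection, $T_*\mu_X$ is $\Gamma$-invariant, and by the standard behavior of entropy under factor maps $h_{T_*\mu_X}(Y) \le h_{\mu_X}(X)$. On the other hand, the Haar measure is the unique measure of maximal entropy for an expansive algebraic action with finite topological entropy (this is where expansiveness and the hypothesis $\htopol(X)=\htopol(Y)<+\infty$ enter, via the variational principle and the known fact that $h_{\mu_X}(X)=\htopol(X)$ for algebraic actions — Yuzvinskii-type results), so it suffices to show $h_{T_*\mu_X}(Y)=\htopol(Y)$, and then maximality forces $T_*\mu_X=\mu_Y$.

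To get that $T_*\mu_X$ has full entropy, I would exploit CPE of $\Gamma\curvearrowright(X,\mu_X)$. The key point is that $h_{\mu_X}(X)=\htopol(X)=\htopol(Y)$, so if $h_{T_*\mu_X}(Y)$ were strictly smaller, there would be a genuine loss of entropy in passing from $(X,\mu_X)$ to $(Y,T_*\mu_X)$. I would then want to locate this loss inside a non-trivial factor of $(X,\mu_X)$ on which the entropy is zero or at least strictly deficient, contradicting CPE. Concretely, consider the Pinsker-type analysis: the map $T$ realizes $(Y,T_*\mu_X)$ as a measure-theoretic factor of $(X,\mu_X)$, and the relative entropy defect is $h_{\mu_X}(X) - h_{T_*\mu_X}(Y) = h_{\mu_X}(X\,|\,Y)$. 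If this is positive, then the $\sigma$-algebra $T^{-1}(\mathcal{B}_Y)$ is not everything modulo the Pinsker algebra; but one can produce a finite partition refining $T^{-1}(\mathcal{B}_Y)$ with positive Shannon entropy yet zero dynamical entropy relative to... — here I would instead argue more directly: since $T$ is a topological factor map between expansive actions, $T^{-1}(\mathcal{B}_Y)$ is a $\Gamma$-invariant sub-$\sigma$-algebra, and CPE says every non-trivial $\Gamma$-invariant sub-$\sigma$-algebra carries positive entropy; combined with the entropy identities this pins $h_{T_*\mu_X}(Y)$.

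The main obstacle, I expect, is making the entropy bookkeeping between $(X,\mu_X)$ and $(Y,T_*\mu_X)$ airtight — in particular justifying $h_{\mu_X}(X)=\htopol(X)$ for algebraic actions (so that the maximal-entropy characterization of Haar measure applies) and showing that a strict drop $h_{T_*\mu_X}(Y)<h_{\mu_X}(X)$ genuinely contradicts CPE rather than just being the ordinary drop allowed for factors. The subtlety is that CPE forbids zero-entropy factors but a priori permits positive-but-smaller-entropy factors; so the argument must use that the \emph{topological} entropies of $X$ and $Y$ agree, forcing $h_{T_*\mu_X}(Y)=\htopol(Y)=\htopol(X)=h_{\mu_X}(X)$, i.e. no drop at all, and then the uniqueness of the maximal measure for expansive algebraic $Y$ closes the argument. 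I would also need to check that $T_*\mu_X$ is fully supported or invoke the relevant uniqueness statement in a form not requiring full support; expansiveness of $\Gamma\curvearrowright Y$ should supply what is needed here.
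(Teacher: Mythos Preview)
Your approach goes in the wrong direction, and the gap you flag at the end is exactly the point where it breaks. You want to push $\mu_X$ forward to $Y$ and argue that $T_*\mu_X$ is the measure of maximal entropy there. But you never establish $h_{T_*\mu_X}(Y)\ge \htopol(Y)$: the only inequality available is $h_{T_*\mu_X}(Y)\le h_{\mu_X}(X)=\htopol(X)=\htopol(Y)$, which is the wrong way. You then try to close this by invoking CPE on $X$, but as you yourself note, CPE only rules out zero-entropy factors, not strictly-smaller-positive-entropy ones; the equality $\htopol(X)=\htopol(Y)$ is a topological statement and does not by itself prevent the \emph{measure} entropy from dropping under $T_*$. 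So the step ``forcing $h_{T_*\mu_X}(Y)=\htopol(Y)$'' is unjustified. A second problem: even granting maximal entropy of $T_*\mu_X$, you invoke uniqueness of the measure of maximal entropy on $Y$, appealing to expansiveness of $Y$. The lemma assumes neither expansiveness nor CPE for $Y$, and without such hypotheses the Haar measure need not be the unique maximal measure.

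The paper's proof runs the argument in the opposite direction and this is the key idea you are missing. Rather than pushing $\mu_X$ to $Y$, one \emph{lifts} $\mu_Y$ to $X$: by amenability and surjectivity of $T$ there is a $\Gamma$-invariant Borel probability measure $\nu$ on $X$ with $T_*\nu=\mu_Y$. Now the entropy chain goes the right way:
\[
\htopol(X)\ge h_\nu(X)\ge h_{T_*\nu}(Y)=h_{\mu_Y}(Y)=\htopol(Y)=\htopol(X),
\]
so $h_\nu(X)=\htopol(X)$. The CPE hypothesis, which lives on $X$, then gives (via \cite[Theorem 8.6]{CL}) that $\mu_X$ is the \emph{unique} $\Gamma$-invariant measure of maximal entropy on $X$; hence $\nu=\mu_X$ and $T_*\mu_X=\mu_Y$. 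The point is that uniqueness of the maximal measure is available on $X$ (where CPE holds) but not on $Y$, so one must arrange for the comparison to take place on $X$.
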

\begin{proof}
Since $\Gamma$ is amenable and $T$ is surjective, there is a $\Gamma$-invariant Borel probability measure $\nu$ on $X$ satisfying $T_*\nu=\mu_Y$. Then
$$\htopol(X)\ge h_\nu(X)\ge h_{\mu_Y}(Y)=\htopol(Y),$$
where the equality is from \cite[Theorem 2.2]{Den} \cite[Proposition 13.2]{KL}. Thus $h_\nu(X)=\htopol(X)$.
%Since $\Gamma\curvearrowright X$ is expansive, we have $\htopol(X)<+\infty$ by Lemma~\ref{L-expansive}. Because
Since the action $\Gamma\curvearrowright (X, \mu_X)$ has CPE and $h_{\mu_X}(X)\le \htopol(X)<+\infty$, by \cite[Theorem 8.6]{CL} we have $h_{\nu'}(X)<h_{\mu_X}(X)$ for every $\Gamma$-invariant Borel probability measure $\nu'$ on $X$ different from $\mu_X$.  Therefore $\nu=\mu_X$. Thus
 $T_*\mu_X=\mu_Y$.
\end{proof}

\begin{theorem} \label{T-Moore}
Let $\Gamma\curvearrowright X$ be an expansive algebraic action with CPE with respect to the normalized Haar measure and $\Gamma\curvearrowright Y$ be an expansive  action on a compact metrizable group by automorphisms. Assume that $\htopol(X)=\htopol(Y)$. Then every surjective continuous $\Gamma$-equivariant  map $T:X\rightarrow Y$ is pre-injective.
\end{theorem}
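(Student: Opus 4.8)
The plan is to argue by contradiction: suppose $T\colon X\to Y$ is surjective but not pre-injective. Since the homoclinic equivalence classes of an algebraic action are the cosets of $\Delta(X)$, failure of pre-injectivity gives $x_1\in X$ and $w\in\Delta(X)$ with $w\neq 0_X$ and $T(x_1)=T(x_1+w)$. By Lemma~\ref{L-metric} fix a translation-invariant compatible metric $\rho$ on $X$ with $M:=\sum_{s\in\Gamma}\rho(sw,0_X)<+\infty$; inspecting the construction in \cite[Lemma~13.33]{KL} (or \cite[Proposition~5.7]{CL}) one may moreover take $\rho$ to be \emph{subadditive}, $\rho(x+x',0_X)\le\rho(x,0_X)+\rho(x',0_X)$. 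Let $\kappa_X$, $\kappa_Y$ be expansive constants for $\Gamma\curvearrowright X$ (with respect to $\rho$) and $\Gamma\curvearrowright Y$ (with respect to a fixed metric $\rho_Y$), and pick $\delta>0$ with $\rho(a,b)\le\delta\Rightarrow\rho_Y(Ta,Tb)\le\kappa_Y/2$. For $v\in\Delta(X)$ set $E_v=\{x\in X:T(x)=T(x+v)\}$; this is closed, $x_1\in E_w$, and $E_{sv}=sE_v$ for $s\in\Gamma$.

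\emph{Rigidity.} Let $D_\delta=\{s\in\Gamma:\rho(sw,0_X)>\delta\}$, a finite set. If $x'\in E_w$ and $\rho(sx,sx')\le\delta$ for all $s\in D_\delta$, then $x\in E_w$: for each $s\in\Gamma$ we bound $\rho_Y(sT(x),sT(x+w))=\rho_Y(T(sx),T(sx+sw))$, using $\rho(sx,sx+sw)=\rho(sw,0_X)\le\delta$ when $s\notin D_\delta$, and when $s\in D_\delta$ inserting $T(sx')=T(sx'+sw)$ (valid since $x'\in E_w$ and $T$ is equivariant) together with $\rho(sx,sx')\le\delta$ and $\rho(sx+sw,sx'+sw)=\rho(sx,sx')\le\delta$; either way the bound is $\le\kappa_Y$, so expansiveness of $\Gamma\curvearrowright Y$ gives $T(x)=T(x+w)$. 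Thus $E_w$ is clopen; being nonempty, it has $\mu_X(E_w)>0$, since by Lemma~\ref{L-measure} (applicable as $\Gamma\curvearrowright(X,\mu_X)$ has CPE and $\htopol(X)=\htopol(Y)<+\infty$) we have $T_*\mu_X=\mu_Y$ and the Haar measure $\mu_X$ has full support. Using subadditivity of $\rho$ and a finite $D'\supseteq D_\delta$ with $\sum_{s\notin D'}\rho(sw,0_X)<\delta$, this iterates: if $s_1,\dots,s_k\in\Gamma$ are sufficiently spread out (so the ``supports'' $s_j^{-1}D'$ of the flips $s_j^{-1}w$ are pairwise disjoint) and $s_jx\in E_w$ for all $j$, then $T(x)=T\bigl(x+\sum_{j}s_j^{-1}w\bigr)$.

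\emph{Counting.} We may assume $\htopol(X)>0$ (otherwise $h_{\mu_X}(X)=0$ and CPE force every finite Borel partition to have zero Shannon entropy, so $\mu_X$ is a point mass and, having full support, $X$ is a point, and the statement is trivial); in particular $\mu_X$ is ergodic, as CPE forbids non-trivial $\Gamma$-invariant sets. By the mean ergodic theorem, $\mu_X\bigl(\{x:|\{s\in F_n:sx\in E_w\}|\ge\tfrac12\mu_X(E_w)|F_n|\}\bigr)\to1$. For such $x$, thinning those $s$ greedily to a spread-out subfamily $B_n(x)\subseteq F_n$ with $|B_n(x)|\ge c|F_n|$ ($c>0$ fixed) and applying the iterated rigidity statement, we get $T(x)=T\bigl(x+\sum_{s\in C}s^{-1}w\bigr)$ for every $C\subseteq B_n(x)$. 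Since $w\neq0_X$ and $\Gamma\curvearrowright X$ is expansive, each single flip moves the configuration by more than $\kappa_X$ at some coordinate, distinct $C$'s act on disjoint coordinate blocks, and all $2^{|B_n(x)|}$ resulting points differ from $x$ only within a F\o lner fattening $DF_n$ of $F_n$ (up to $\delta$); hence they are mutually separated there. A standard Moore-type count — fibring the admissible patterns on $DF_n$ over the image patterns, of which $\mu_X$-almost every fibre has cardinality $\ge 2^{c|F_n|}$, and using $|DF_n|=(1+o(1))|F_n|$ — yields
\[
\spn\bigl(Y,\rho_{Y,F_n},\varepsilon\bigr)\ \le\ e^{(\htopol(X)+o(1))|DF_n|}\,2^{-c|F_n|}\ =\ e^{(\htopol(X)-c\log 2+o(1))|F_n|},
\]
so $\htopol(T(X))\le\htopol(X)-c\log2<\htopol(X)=\htopol(Y)$, contradicting $T(X)=Y$.

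The main obstacle is the last paragraph: converting ``$\mu_X$-typical configurations have an $e^{\Omega(|F_n|)}$-large fibre over the window $F_n$'' into the entropy drop $\htopol(T(X))<\htopol(X)$. One has to (i) verify that the $2^{|B_n(x)|}$ flipped points are genuinely $\rho_{X,F_n}$-separated (here $w\neq0_X$ and expansiveness of $X$ locate a coordinate witnessing each flip, and subadditivity of $\rho$ plus $\sum_s\rho(sw,0_X)<\infty$ keep the accumulated tails of the disjointly-supported flips below the separation threshold) and localized to a F\o lner fattening of size $(1+o(1))|F_n|$; and (ii) pass from the measure statement (ergodicity of $\mu_X$) to a counting statement about admissible window patterns, so the fibre multiplicity really divides out of the total pattern count. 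The rest — the rigidity lemma, the identity $T_*\mu_X=\mu_Y$ from Lemma~\ref{L-measure}, and ergodicity from CPE — is comparatively routine. Finally one must keep in mind that $T$ is only continuous and $\Gamma$-equivariant, not a group homomorphism, so the argument stays ``pointwise'' and does not use the group structure of $Y$.
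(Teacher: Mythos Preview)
Your outline follows the same architecture as the paper's proof: pick $w=\omega-x\in\Delta(X)\setminus\{0\}$ with $T(x)=T(x+w)$, use Lemma~\ref{L-metric}, establish a ``rigidity'' statement allowing independent flips by $s^{-1}w$ at spread-out return times, use CPE $\Rightarrow$ ergodicity together with the mean ergodic theorem to produce many return times, and count. Your $E_w$-is-clopen packaging is a clean variant of the paper's claim that the ball $B$ around $x$ already lies in $E_w$; note, incidentally, that subadditivity $\rho(a+b,0)\le\rho(a,0)+\rho(b,0)$ is automatic for any translation-invariant metric on an abelian group (triangle inequality plus $\rho(0,-b)=\rho(b,0)$), so you need not appeal to the specific construction in \cite{KL,CL}.

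The place where your sketch is genuinely incomplete is exactly the one you flag. Your displayed inequality $\spn(Y,\rho_{Y,F_n},\varepsilon)\le e^{(\htopol(X)+o(1))|DF_n|}2^{-c|F_n|}$ is not justified: the large-fibre property holds only over $T(X_n)$, not over all of $Y$, so one cannot simply divide the total $X$-pattern count by the fibre multiplicity. The paper resolves this by working from the $Y$-side first. Using the Haar measure bound $\mu_Y(D_n)\cdot\sep(Y,\rho_{Y,F_n},\varepsilon)\le 1$ (where $D_n$ is the $\rho_{Y,F_n}$-ball of radius $\varepsilon/2$) together with $T_*\mu_X=\mu_Y$ and $\mu_X(X_n)\to 1$, one extracts a $(\rho_{Y,F_n},\varepsilon/2)$-separated set $W_n'\subseteq T(X_n)$ with $|W_n'|\ge \mu_Y(T(X_n))\cdot\sep(Y,\rho_{Y,F_n},\varepsilon)$; this is the step that converts the measure statement into a separated-set count. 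One then lifts $W_n'$ to $V_n\subseteq X_n$ and performs the $2^{c|F_n|}$ flips on each $v\in V_n$. Since $T(v_A)=T(v)$ and the $T(v)$'s are $(\rho_{Y,F_n},\varepsilon/2)$-separated, flips coming from distinct $v$'s are automatically $(\rho_{X,F_n},2\delta)$-separated, while flips over the same $v$ are separated at the flip coordinate as you indicate. This yields $\sep(X,\rho_{X,F_n},\delta)\ge |V_n|\cdot 2^{c|F_n|}$ and hence $\htopol(X)\ge\htopol(Y)+c\log 2$, the desired contradiction. Once you insert this $W_n'$-construction in place of your fibring heuristic, your argument coincides with the paper's.
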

\begin{proof}When $\Gamma$ is finite, we have $|X|=|\Gamma|\htopol(X)=|\Gamma|\htopol(Y)=|Y|<+\infty$. Then $T$ is actually injective. Thus we may assume that $\Gamma$ is infinite.

Assume that $T$ is not pre-injective. Then there is a homoclinic pair $(x, \omega)\in X^2$ such that $x\neq \omega$ and $Tx=T\omega$. We get $\omega-x\in \Delta(X)$.

By Lemma~\ref{L-metric} we can find a compatible translation-invariant metric $\rho_X$ on $X$  such that $\sum_{s\in \Gamma}\rho_X(sx', 0_X)<+\infty$ for all $x'\in \Delta(X)$. We also take a compatible translation-invariant metric $\rho_Y$ on $Y$.

Denote by $\mu_X$ and $\mu_Y$ the normalized Haar measure of $X$ and $Y$ respectively.

Take a common expansive constant $\kappa>0$ for the action $\Gamma\curvearrowright X$ with respect to $\rho_X$ and the action $\Gamma\curvearrowright Y$ with respect to $\rho_Y$.
Let $0<\varepsilon<\kappa/4$. By Lemma~\ref{L-expansive} we have
$$\htopol(Y)=\lim_{n\to \infty}\frac{1}{|F_n|}\log \sep(Y, \rho_{Y, F_n}, \varepsilon).$$
For each $n\in \Nb$, denote by $D_n$ the set of $y\in Y$ satisfying $\max_{s\in F_n}\rho_Y(sy, e_Y)<\varepsilon/2$, where $e_Y$ denotes the identity element of $Y$, and take a
$(\rho_{Y, F_n}, \varepsilon)$-separated subset $W_n$ of $Y$ with maximal cardinality. For any distinct $y, z\in W_n$, one has $(yD_n)\cap (zD_n)=\emptyset$.  Thus $\mu_Y(D_n)|W_n|\le 1$, and hence
$$1/\mu_Y(D_n)\ge |W_n|.$$

Since $T$ is continuous and $X$ is compact, we can find $0<\delta<\min(\kappa/4, \rho_X(\omega-x, 0_X)/2)$ such that for any $x_1, x_2\in X$ with $\rho_X(x_1, x_2)\le 2\delta$, one has $\rho_Y(Tx_1, Tx_2)<\varepsilon/16$.
Since $\sum_{s\in \Gamma}\rho_X(s(\omega-x), 0_X)<+\infty$, we can find a symmetric finite set $F\subseteq \Gamma$ containing $e_\Gamma$ such that
$$\sum_{s\in \Gamma\setminus F}\rho_X(s(\omega-x), 0_X)<\delta.$$
Take $0<\tau<\delta$ such that for any $x_1, x_2\in X$ with $\rho_X(x_1, x_2)\le \tau$, one has $\max_{s\in F}\rho_X(sx_1, sx_2)\le \delta$.

Denote by $B$ the set of all $x'\in X$ satisfying $\rho_X(x', x)\le \tau$. Then $\mu_X(B)>0$. Since the action $\Gamma\curvearrowright (X, \mu_X)$ has CPE, $\mu_X$ is ergodic. By the mean ergodic theorem \cite[Theorem 4.22]{KL}, we have $\|\frac{1}{|F_n|}\sum_{s\in F_n}s^{-1}1_{B}-\mu_X(B)\|_2\to 0$ as $n\to \infty$, where $1_B$ denotes the characteristic function of $B$. For each $n\in \Nb$, denote by $X_n$ the set of $x'\in X$ satisfying $|\{s\in　F_n: sx'\in B\}|\ge |F_n|\mu_X(B)/2$. Then $X_n$ is closed and  $\mu_X(X_n)\to 1$ as $n\to \infty$.

Since the action $\Gamma\curvearrowright X$ is expansive, by Lemma~\ref{L-expansive} we have $\htopol(X)<+\infty$.
By Lemma~\ref{L-measure} we have  $T_*\mu_X=\mu_Y$. Then $\mu_Y(T(X_n))\ge \mu_X(X_n)$, and hence $\mu_Y(T(X_n))\to 1$ as $n\to \infty$. Take a maximal $(\rho_{Y, F_n}, \varepsilon/2)$-separated subset $W_n'$ of $T(X_n)$. Then $yD_n$ for $y\in W_n'$ covers $T(X_n)$. Thus $\mu_Y(T(X_n))\le \mu_Y(D_n)|W_n'|$, whence
 $$|W'_n|\ge \mu_Y(T(X_n))/\mu_Y(D_n)\ge \mu_Y(T(X_n))|W_n|.$$
 Take a subset $V_n$ of $X_n$ with $|V_n|=|W'_n|$ and $T(V_n)=W'_n$. Then
\begin{align*}
\liminf_{n\to \infty}\frac{1}{|F_n|}\log |V_n|&=\liminf_{n\to \infty}\frac{1}{|F_n|}\log |W'_n|\\
&\ge  \liminf_{n\to \infty}\frac{1}{|F_n|}\log \mu_Y(T(X_n))+\liminf_{n\to \infty}\frac{1}{|F_n|}\log |W_n|\\
&=\htopol(Y).
\end{align*}

For any $n\in \Nb$ and $v\in V_n$, write $E_{n, v}=\{s\in F_n: sv\in B\}$ and take a maximal set $E'_{n, v}\subseteq E_{n, v}$ subject to the condition $Ft\cap Fs=\emptyset$ for all distinct $s, t\in E'_{n, v}$. Then $F^2E'_{n, v}\supseteq E_{n, v}$, and hence
$$|E'_{n, v}|\ge |E_{n, v}|/|F|^2\ge |F_n|\mu_X(B)/(2|F|^2).$$
For each set $A\subseteq E'_{n, v}$, define
$$v_A=v+\sum_{s\in A}s^{-1}(\omega-x)\in X.$$
We claim that $Tv_A=Tv$. It suffices to show $\rho_Y(tTv_A, tTv)=\rho_Y(T(tv_A), T(tv))<\varepsilon/8$ for all $t\in \Gamma$. Let $t\in \Gamma$. If $t\not \in FA$, then
$$\rho_X(tv_A, tv)=\rho_X(\sum_{s\in A}ts^{-1}(\omega-x), 0_X)\le \sum_{s\in \Gamma\setminus F}\rho_X(s(\omega-x), 0_X)<\delta,$$
and hence $\rho_Y(T(tv_A), T(tv))<\varepsilon/16$. Now consider the case $t\in FA$. Say $t=\gamma s'$ for some $\gamma \in F$ and $s'\in A$. Then
\begin{align*}
\rho_X(tv_A, \gamma \omega)&=\rho_X(\gamma s'v+\sum_{s\in A\setminus \{s'\}}ts^{-1}(\omega-x), \gamma x)\\
&\le \rho_X(\gamma s'v, \gamma x)+\sum_{s\in \Gamma\setminus F}\rho_X(s(\omega-x), 0_X)<2\delta,
\end{align*}
and
$$\rho_X(tv, \gamma x)=\rho_X(\gamma s'v, \gamma x)\le \delta.$$
Therefore
\begin{align*}
\rho_Y(T(tv_A), \gamma Tx)=\rho_Y(T(tv_A), T(\gamma \omega))<\varepsilon/16,
\end{align*}
and
$$\rho_Y(T(tv), \gamma Tx)=\rho_Y(T(tv), T(\gamma x))<\varepsilon/16.$$
Consequently,
$$\rho_Y(T(tv_A), T(tv))\le \rho_Y(T(tv_A), \gamma Tx)+\rho_Y(T(tv), \gamma Tx)<\varepsilon/8.$$
This proves our claim.

Write $V^\dag_n:=\{v_A: v\in V_n, A\subseteq E'_{n, v}\}$.
For any $v\in V_n$ and distinct $A, A'\subseteq E'_{n, v}$, say $t\in A\setminus A'$, we have
\begin{align*}
\rho_X(tv_A, tv_{A'})&\ge
\rho_X(\omega-x, 0_X)-\sum_{s\in (A\setminus \{t\})\Delta A'}\rho_X(ts^{-1}(\omega-x), 0_X)\\
&\ge \rho_X(\omega-x, 0_X)-\sum_{s\in \Gamma\setminus F}\rho_X(s(\omega-x), 0_X)\\
&\ge \delta.
\end{align*}
For any distinct $v, z\in V_n$, and $A\subseteq E'_{n, v}$ and $A'\subseteq E'_{n, z}$, we have
$$ \max_{s\in F_n}\rho_Y(Tsv_A, Tsz_{A'})=\max_{s\in F_n}\rho_Y(sTv_A, sTz_{A'})=\max_{s\in F_n}\rho_Y(sTv, sTz)\ge \varepsilon/2,$$
whence  $\max_{s\in F_n}\rho_X(sv_A, sz_{A'})> 2\delta$. Thus $V^\dag_n$ is $(\rho_{X, F_n}, \delta)$-separated, and
$$|V^\dag_n|\ge |V_n|2^{|F_n|\mu_X(B)/(2|F|^2)}.$$
Therefore by Lemma~\ref{L-expansive} we have
\begin{align*}
\htopol(X)&= \lim_{n\to \infty}\frac{1}{|F_n|}\log \sep(X, \rho_{X, F_n}, \delta)\\
&\ge \liminf_{n\to \infty}\frac{1}{|F_n|}\log |V_n|+\frac{\mu_X(B)\log 2}{2|F|^2}\\
&\ge \htopol(Y)+\frac{\mu_X(B)\log 2}{2|F|^2},
\end{align*}
which is a contradiction to the hypothesis $\htopol(X)=\htopol(Y)$. Thus $T$ is pre-injective.
\end{proof}

Now Theorem~\ref{T-Moore1} follows from Theorem~\ref{T-Moore} directly.

%%%%%%%%%%%%%%%%%%%%%%%%%%%%%%%%%%%%%%%%%%%%%%%%%%%%%%%%%%%%%%%%%%%%%%%%%%%%%%%%%%%%%%%%%%%%%%%%%%%%%%%%%%%%%5
\section{Weak specification and independence} \label{S-ind}

In this section we discuss implications of weak specification to combinatorial independence and prove Corollary~\ref{C-CPE}.

Let a countable (not necessarily amenable) group $\Gamma$ act on a compact metrizable space $X$ continuously. Let ${\bf A}=(A_1, \dots, A_k)$ be a tuple of subsets of $X$. A nonempty finite set $K'\subseteq \Gamma$ is called {\it an independence set} for ${\bf A}$ if $\bigcap_{s\in K'}s^{-1}A_{\omega(s)}\neq \emptyset$ for all maps $\omega: K'\rightarrow \{1, \dots, k\}$ \cite[Definition 8.7]{KL}. For any nonempty finite set $K\subseteq \Gamma$ write $\varphi_{\bf A}(K)$ for the maximal cardinality of independence sets $K'$ of $\bf A$ satisfying $K'\subseteq K$.
The  {\it  independence density} of $\bf A$ is defined as
$$I({\bf A}):=\inf_K\frac{\varphi_{\bf A}(K)}{|K|},$$
where $K$ ranges over all nonempty finite subsets of $\Gamma$ \cite[Definition 3.1]{KL13b}.
A tuple ${\bf x}=(x_1, \dots, x_k)\in X^k$ is called {\it an orbit IE-tuple} if for every product neighborhood $U_1\times \cdots \times U_k$ of $\bf x$, the tuple $(U_1, \dots, U_k)$ has positive independence density \cite[Definition 3.2]{KL13b}.

\begin{proposition} \label{P-UPE}
For any action $\Gamma\curvearrowright X$  with the weak specification property, every tuple is an orbit IE-tuple.
\end{proposition}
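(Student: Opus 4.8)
The plan is to show directly that for any tuple $\mathbf{A} = (A_1, \dots, A_k)$ of \emph{nonempty} open subsets of $X$, one has $I(\mathbf{A}) > 0$; applying this to product neighborhoods $U_1 \times \cdots \times U_k$ of an arbitrary point $\mathbf{x} \in X^k$ then immediately gives that $\mathbf{x}$ is an orbit IE-tuple. So fix nonempty open sets $A_1, \dots, A_k$, pick points $p_i \in A_i$, and choose $\varepsilon > 0$ small enough that the $\varepsilon$-ball around each $p_i$ (in a fixed compatible metric $\rho$) lies inside $A_i$. Apply the weak specification property to this $\varepsilon$ to obtain a nonempty symmetric finite set $F \subseteq \Gamma$. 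The claim will be that every finite set $K' \subseteq \Gamma$ which is ``$F$-separated'' in the sense that the singletons $\{s\}$, $s \in K'$, satisfy $F\{s\} \cap \{t\} = \emptyset$ for all distinct $s, t \in K'$ (equivalently $s^{-1}t \notin F$ for distinct $s,t$, which holds automatically since $e_\Gamma$ may be taken in $F$ and $F$ is symmetric — or one just works with the sets $F_s := \{s\}$) is an independence set for $\mathbf{A}$.

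The key step is this: given any $\omega \colon K' \to \{1, \dots, k\}$, set $F_s = \{s\}$ for each $s \in K'$ and $x_s = p_{\omega(s)}$. The hypothesis $FF_i \cap F_j = \emptyset$ for distinct $i, j$ is exactly the $F$-separation condition on $K'$, so weak specification yields a point $x \in X$ with $\rho(sx, sx_s) = \rho(sx, p_{\omega(s)}) \le \varepsilon$, hence $\rho(sx, p_{\omega(s)}) < $ (strict, after shrinking $\varepsilon$ slightly or using open balls of radius $2\varepsilon$) so that $sx \in A_{\omega(s)}$, i.e. $x \in \bigcap_{s \in K'} s^{-1} A_{\omega(s)}$. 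Thus this intersection is nonempty for every $\omega$, so $K'$ is an independence set. The only remaining point is to verify that $F$-separated subsets of $\Gamma$ have positive density: take a maximal $F$-separated subset $K'$ of an arbitrary nonempty finite $K \subseteq \Gamma$, then by maximality $FK' \supseteq K$ (every element of $K$ lies in $Ft$ for some $t \in K'$, else it could be added), so $|K'| \ge |K|/|F|$, giving $\varphi_{\mathbf{A}}(K)/|K| \ge 1/|F|$ and therefore $I(\mathbf{A}) \ge 1/|F| > 0$.

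There is essentially no serious obstacle here; the content is entirely in unwinding the definitions and recognizing that the combinatorial input ``$FF_i \cap F_j = \emptyset$'' of weak specification, applied to singleton sets $F_j$, is precisely the condition under which maximal selections have density at least $1/|F|$ — the same counting argument used for $K_n$ in the proof of Proposition~\ref{P-subaction}. The mild technical care needed is just the standard open/closed ball bookkeeping: weak specification gives $\rho(sx, sx_s) \le \varepsilon$, so one should choose $\varepsilon$ at the outset so that the closed $\varepsilon$-ball about each $p_i$ is contained in $A_i$, which is possible since the $A_i$ are open. With that choice, $sx$ lands in $A_{\omega(s)}$ as required, and the argument is complete.
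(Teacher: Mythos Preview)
Your approach is exactly the paper's: apply weak specification with singleton sets $F_s = \{s\}$ indexed by $s \in K'$, use the resulting shadowing to show $F$-separated subsets are independence sets for $\mathbf{A}$, and use the maximal-packing argument $FK' \supseteq K$ to get density at least $1/|F|$.

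There is, however, a slip in the choice of tracking points. With $x_s = p_{\omega(s)}$ as you wrote, weak specification only gives $\rho(sx, sx_s) = \rho(sx, s\, p_{\omega(s)}) \le \varepsilon$, which does not place $sx$ near $p_{\omega(s)}$. The correct choice is $x_s = s^{-1} p_{\omega(s)}$, so that $sx_s = p_{\omega(s)}$ and hence $\rho(sx, p_{\omega(s)}) \le \varepsilon$; this is precisely how the paper sets things up. With that one-line correction your argument is complete and matches the paper's proof line for line.
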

\begin{proof}
Let $k\in \Nb$ and $x_1, \dots, x_k\in X$. We shall show that $(x_1, \dots, x_k)\in X^k$ is an orbit IE-tuple.

Let $\rho$ be a compatible metric on $X$, and let $\varepsilon>0$. Denote by $D_i$ the set of all $x\in X$ satisfying $\rho(x, x_i)\le \varepsilon$. By the weak specification property there is some symmetric finite subset $F$ of $\Gamma$ containing $e_\Gamma$ such that for any finite collection $\{F_j\}_{j\in J}$ of finite subsets of $\Gamma$ satisfying $FF_i\cap F_j=\emptyset$ for all distinct $i, j\in J$ and any collection $\{y_j\}_{j\in J}$ of points in $X$, there is some $y\in X$ such that $\rho(y, sy_j)\le \varepsilon$ for all $j\in J$ and $s\in F_j$.

Let $K$ be a nonempty finite subset of $\Gamma$. Take a maximal subset $K'$ of $K$ subject to the condition that $s\not\in Ft$ for all distinct $s, t\in K'$. Then $FK'\supseteq K$, and hence $|K'|\ge |K|/|F|$. Let $\omega$ be a map $K'\rightarrow \{1, \dots, k\}$. Then there is some $x\in X$ such that $\rho(sx, x_{\omega(s)})\le \varepsilon$ for all $s\in K'$. Thus $K'$ is an independence set for the tuple $(D_1, \dots, D_k)$. It follows that $(D_1, \dots, D_k)$ has independence density at least $1/|F|$. Therefore $(x_1, \dots, x_k)$ is an orbit IE-tuple.
\end{proof}

Now we consider the case $\Gamma$ is amenable.
An action $\Gamma\curvearrowright X$ has positive entropy if and only if there is at least one non-diagonal orbit IE-pair in $X^2$ \cite[Definition 12.5, Theorem 12.19]{KL}. Thus we get

\begin{corollary} \label{C-factor}
For any countable amenable group, every continuous action on a compact metrizable space with the weak specification property and more than one point has positive entropy.
\end{corollary}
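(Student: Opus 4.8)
The plan is to derive this as an immediate consequence of Proposition~\ref{P-UPE} together with the characterization of positive entropy recalled just before the statement. First I would use the hypothesis that $X$ has more than one point to fix two distinct points $x_1, x_2 \in X$. Proposition~\ref{P-UPE} then applies to the action $\Gamma\curvearrowright X$ --- which has the weak specification property by assumption --- and shows that the pair $(x_1, x_2) \in X^2$ is an orbit IE-pair. Since $x_1 \neq x_2$, this is a \emph{non-diagonal} orbit IE-pair. Finally, invoking the equivalence \cite[Definition 12.5, Theorem 12.19]{KL}, that for an action of a countable amenable group positive entropy is equivalent to the existence of a non-diagonal orbit IE-pair in $X^2$, we conclude that $\htopol(X) > 0$.

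I do not expect any genuine obstacle here: the whole content has already been placed in Proposition~\ref{P-UPE}, whose proof uses the weak specification property to produce, for each $\varepsilon > 0$, a symmetric finite set $F \ni e_\Gamma$ witnessing that the $\varepsilon$-neighborhoods of $x_1$ and $x_2$ have independence density at least $1/|F| > 0$. Specializing to $k = 2$ points supplies exactly the input demanded by the orbit-IE-pair criterion for positive entropy, so the corollary follows with no further computation. The only point to watch is purely a matter of matching definitions: one should phrase the entropy criterion in the orbit-IE-pair language (as done in the text) rather than the classical IE-pair language; since the statement is quoted in precisely the form needed, nothing else is required.
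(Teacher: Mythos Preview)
Your proposal is correct and follows exactly the same approach as the paper: pick a non-diagonal pair, apply Proposition~\ref{P-UPE} to see it is an orbit IE-pair, and invoke the cited equivalence \cite[Definition 12.5, Theorem 12.19]{KL} between positive entropy and the existence of a non-diagonal orbit IE-pair. The paper's own proof is nothing more than the sentence ``Thus we get'' following that citation.
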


Corollary~\ref{C-factor} was proved before under the further assumption of subshifts by Ceccherini-Silberstein and Coornaert \cite[Proposition 4.5]{CC12}.

For any action of a countable amenable group $\Gamma$ on a compact metrizable group $X$ by automorphisms, every pair in $X^2$ is an orbit IE-pair if and only if the action has CPE with respect to the normalized Haar measure \cite[Definition 12.5]{KL} \cite[Theorem 7.3, Corollary 8.4]{CL}. Thus we get

\begin{corollary} \label{C-CPE}
Every weak specification action of a countable amenable group on a compact metrizable group by automorphisms has  CPE with respect to the normalized Haar measure.
\end{corollary}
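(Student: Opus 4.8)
The plan is to deduce this immediately from Proposition~\ref{P-UPE} together with the combinatorial-independence characterization of completely positive entropy for algebraic actions. Concretely, let $\Gamma\curvearrowright X$ be an action of a countable amenable group on a compact metrizable group $X$ by automorphisms, and suppose it has the weak specification property. Forgetting the group structure, the action is in particular a continuous action of $\Gamma$ on the compact metrizable space $X$ with the weak specification property, so Proposition~\ref{P-UPE} applies and tells us that \emph{every} tuple in $X^k$ is an orbit IE-tuple; in particular every pair $(x_1,x_2)\in X^2$ is an orbit IE-pair.

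The next step is to invoke the known equivalence, for actions of countable amenable groups on compact metrizable groups by automorphisms, between the statement ``every pair in $X^2$ is an orbit IE-pair'' and the statement ``the action has CPE with respect to the normalized Haar measure $\mu_X$'' (this is \cite[Definition 12.5]{KL} together with \cite[Theorem 7.3, Corollary 8.4]{CL}, exactly as cited in the paragraph preceding the corollary). Feeding the conclusion of the previous paragraph into the ``$\Rightarrow$'' direction of this equivalence yields that $\Gamma\curvearrowright(X,\mu_X)$ has CPE, which is precisely the assertion of the corollary.

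There is essentially no obstacle here: the corollary is a one-line consequence of Proposition~\ref{P-UPE} and the cited characterization, and all the real work has already been done. The only points worth a word of care are (i) checking that the hypotheses of Proposition~\ref{P-UPE} are met — this is automatic, since weak specification of the algebraic action is by definition a property of the underlying topological action and makes no reference to the group structure — and (ii) making sure the cited equivalence is stated for exactly this class of actions (amenable acting group, compact metrizable group as phase space, action by automorphisms), which it is. I would therefore present the proof as: ``By Proposition~\ref{P-UPE}, every pair in $X^2$ is an orbit IE-pair. By \cite[Definition 12.5]{KL} \cite[Theorem 7.3, Corollary 8.4]{CL}, this is equivalent to the action having CPE with respect to the normalized Haar measure.''
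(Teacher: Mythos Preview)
Your proposal is correct and matches the paper's own argument essentially verbatim: the paper deduces the corollary by noting that Proposition~\ref{P-UPE} makes every pair an orbit IE-pair, and then invoking the equivalence from \cite[Definition 12.5]{KL} and \cite[Theorem 7.3, Corollary 8.4]{CL} between this condition and CPE for actions by automorphisms on compact metrizable groups.
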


%%%%%%%%%%%%%%%%%%%%%%%%%%%%%%%%%%%%%%%%%%%%%%%%%%%%%%%%%%%%%%%%%%%%%%%%%%%%%%%%%%%%%%%%%%%%%%%%%%%%%%%%%%%%%%%%%%%%%
\appendix

\section{Subshifts with weak specification} \label{S-subshifts}

Let $\Gamma$ be a countable (not necessarily amenable) group, and let $A$ be a finite set. We consider the shift action $\Gamma\curvearrowright A^\Gamma$ given by $(sx)_t=x_{s^{-1}t}$ for all $x\in A^G$ and $s, t\in \Gamma$. A closed $\Gamma$-invariant subset $X$ of $A^\Gamma$ is called {\it strongly irreducible} \cite[Definition 4.1]{Fiorenzi03} if there exists a nonempty symmetric finite set $F\subseteq \Gamma$ such that for any finite sets $F_1, F_2\subseteq \Gamma$ with $F_1F\cap F_2=\emptyset$ and any $x_1, x_2\in X$, there exists $x\in X$ such that $x=x_1$ on $F_1$
and $x=x_2$ on $F_2$.

\begin{proposition} \label{P-subshifts}
For any closed $\Gamma$-invariant subset $X$ of $A^\Gamma$, $X$ is strongly irreducible if and only if it has the weak specification property.
\end{proposition}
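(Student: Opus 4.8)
The plan is to unwind both notions against the standard metric on $A^\Gamma$ and translate between the combinatorics of finite subsets of $\Gamma$ and the metric formulation. First I would fix a bijection $n\mapsto s_n$ of $\Nb$ onto $\Gamma$ with $s_1=e_\Gamma$, and work with the compatible metric $\rho$ on $A^\Gamma$ given by $\rho(x,x)=0$ and $\rho(x,y)=2^{-\min\{n\,:\,x_{s_n}\neq y_{s_n}\}}$ for $x\neq y$; since the weak specification property is easily seen not to depend on the choice of compatible metric on a compact space, this costs nothing. The two features of $\rho$ I use are: (i) $\rho(x,y)<1$ forces $x_{e_\Gamma}=y_{e_\Gamma}$; and (ii) for every $\varepsilon>0$ there is a symmetric finite set $E_\varepsilon\subseteq\Gamma$ containing $e_\Gamma$ such that $x|_{E_\varepsilon}=y|_{E_\varepsilon}$ implies $\rho(x,y)\le\varepsilon$. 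I also record the shift identity $(sx)|_E=(sy)|_E$ iff $x|_{s^{-1}E}=y|_{s^{-1}E}$, and the trivial remark that both properties in the statement are preserved under enlarging the witnessing finite set $F$, so witnesses may always be taken symmetric and containing $e_\Gamma$.

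For the implication from weak specification to strong irreducibility, I apply weak specification with $\varepsilon=1/2$ to obtain a symmetric finite $F\ni e_\Gamma$, and claim it witnesses strong irreducibility. Given finite $F_1,F_2\subseteq\Gamma$ with $F_1F\cap F_2=\emptyset$ and $x_1,x_2\in X$, I feed the two sets $F_1^{-1},F_2^{-1}$ and the points $x_1,x_2$ into weak specification: taking inverses and using that $F$ is symmetric, $F_1F\cap F_2=\emptyset$ is equivalent to $F\cdot F_i^{-1}\cap F_j^{-1}=\emptyset$ for distinct $i,j\in\{1,2\}$, so there is $x\in X$ with $\rho(sx,sx_j)\le 1/2$ for all $j$ and all $s\in F_j^{-1}$. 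By (i) this gives $x_{s^{-1}}=(x_j)_{s^{-1}}$, and letting $s$ run over $F_j^{-1}$ yields $x|_{F_j}=x_j|_{F_j}$ for $j=1,2$, which is exactly the required gluing.

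For the converse, fix $\varepsilon>0$, let $F_0$ be a strong-irreducibility witness (symmetric, containing $e_\Gamma$), set $E_0:=E_\varepsilon$ as in (ii), and put $F:=E_0F_0E_0$, again symmetric and containing $e_\Gamma$. Given a finite collection $\{F_j\}_{j\in J}$ with $FF_i\cap F_j=\emptyset$ for distinct $i,j$ and points $\{x_j\}_{j\in J}$ in $X$, set $G_j:=F_j^{-1}E_0=\bigcup_{s\in F_j}s^{-1}E_0$. A short group computation shows $G_iF_0\cap G_j=\emptyset$ for distinct $i,j$: if $u^{-1}vf=u'^{-1}v'$ with $u\in F_i,\ u'\in F_j,\ v,v'\in E_0,\ f\in F_0$, then $u'u^{-1}=v'f^{-1}v^{-1}\in E_0F_0E_0=F$, so $u'\in FF_i\cap F_j$, a contradiction. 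I then promote the two-set gluing of strong irreducibility to a gluing over the finite set $J$ by induction on $|J|$: enumerating $J=\{1,\dots,N\}$, at stage $N$ I glue the partial solution on $\bigcup_{i<N}G_i$ with $x_N$ on $G_N$, which is permitted since $\bigl(\bigcup_{i<N}G_i\bigr)F_0\cap G_N=\bigcup_{i<N}(G_iF_0\cap G_N)=\emptyset$. This produces $x\in X$ with $x|_{G_j}=x_j|_{G_j}$ for every $j$. Finally, for $j\in J$ and $s\in F_j$ we have $s^{-1}E_0\subseteq G_j$, hence $(sx)|_{E_0}=(sx_j)|_{E_0}$, hence $\rho(sx,sx_j)\le\varepsilon$ by (ii); this is precisely weak specification for the given $\varepsilon$.

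No step is genuinely hard; the argument is essentially a dictionary lookup. The places where one must be careful are the left/right conventions of the shift action — because $(sx)_t=x_{s^{-1}t}$, controlling $\rho(sx,sx_j)$ over $s\in F_j$ corresponds to agreement on $F_j^{-1}E_0$, which is why inverse sets appear when passing from one formulation to the other — together with the routine checks that products such as $E_0F_0E_0$ remain symmetric and that the finite-collection induction in the second implication goes through; these are the only points I would write out in full detail.
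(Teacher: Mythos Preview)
Your proof is correct and follows essentially the same route as the paper's: in both directions you translate between metric closeness and agreement on finite windows, passing to inverse sets to handle the left/right convention of the shift, and enlarging the strong-irreducibility witness $F_0$ to $E_0F_0E_0$ (the paper uses $KFK^{-1}$, the same thing up to your symmetry normalization) before gluing inductively over the finite index set. The only differences are cosmetic---you fix an explicit ultrametric and spell out the group computation showing $G_iF_0\cap G_j=\emptyset$, whereas the paper works with an arbitrary compatible metric and leaves the induction as a remark---so there is nothing further to compare.
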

\begin{proof} Take a compatible metric $\rho$ of $X$.

Suppose that $X$ is strongly irreducible. Let $F\subseteq \Gamma$ witness the strong irreducibility of $X$. By induction it is easy to see that for any finite collection $\{F_j\}_{j\in J}$ of finite subsets of $\Gamma$ satisfying $F_iF\cap F_j=\emptyset$ for all distinct $i, j\in J$ and any collection $\{x_j\}_{j\in J}$ of points in $X$, there is some $x\in X$ such that $x=x_j$ on $F_j$ for all $j\in J$. Let $\varepsilon>0$. Then there is some nonempty finite subset $K$ of $\Gamma$ such that for any
$y, z\in X$ with $y=z$ on $K$ one has $\rho(y, z)\le \varepsilon$. Now let $\{F_j\}_{j\in J}$ be a finite collection of finite subsets of $\Gamma$ satisfying $(KFK^{-1})F_i\cap F_j=\emptyset$ for all distinct $i, j\in J$ and $\{x_j\}_{j\in J}$ be a collection of points in $X$. Then $(F_i^{-1}K)F\cap (F_j^{-1}K)=\emptyset$ for all distinct $i, j\in J$. Thus there is some $x\in X$ satisfying $x=x_j$ on $F_j^{-1}K$ for all $j\in J$. For any $j\in J$ and $s\in F_j$, we get $sx=sx_j$ on $K$, and hence $\rho(sx, sx_j)\le \varepsilon$. Therefore $X$ has the weak specification property.

Conversely suppose that $X$ has the weak specification property. Take $\varepsilon>0$ such that any two points $y, z\in X$ satisfying $\rho(y, z)\le \varepsilon$ must coincide at $e_\Gamma$. Then there is some nonempty symmetric finite subset $F$ of $\Gamma$ such that for any finite subsets $F_1$ and $F_2$ of $\Gamma$ satisfying  $FF_1\cap F_2=\emptyset$ and any points $x_1, x_2\in X$, there is some $x\in X$ such that $\rho(sx, sx_j)\le \varepsilon$ for all $j=1, 2$ and $s\in F_j$. Now let $F_1$ and $F_2$ be finite subsets of $\Gamma$ satisfying $F_1F\cap F_2=\emptyset$ and $x_1, x_2\in X$. Then $FF_1^{-1}\cap F_2^{-1}=\emptyset$. Thus there is some $x\in X$ such that $\rho(sx, sx_j)\le \varepsilon$ for all $j=1, 2$ and $s\in F_j^{-1}$. Then $sx=sx_j$ at $e_\Gamma$, which means $x=x_j$ at $s^{-1}$. Therefore $X$ is strongly irreducible.
\end{proof}

%%%%%%%%%%%%%%%%%%%%%%%%%%%%%%%%%%%%%%%%%%%%%%%%%%%%%%%%%%%%%%%%%%%%%%%%%%%%%%%%%%%%%%%%%%%%%%%%%%%%%%%%%%%%%%%%%%%%%%%%%%%


\begin{thebibliography}{99}
\Small

\bibitem{Bartholdi10}
L.~Bartholdi. Gardens of Eden and amenability on cellular automata. {\it J. Eur. Math. Soc.} {\bf 12} (2010), no. 1, 241--248.

\bibitem{Bartholdi16}
L.~Bartholdi. Amenability of groups is characterized by Myhill's Theorem, with an appendix by D.~Kielak.  arXiv:1605.09133. {\it J. Eur. Math. Soc.} to appear.


%\bibitem{Bhattacharya}
%S.~Bhattacharya. Orbit equivalence and topological conjugacy of affine actions on compact abelian groups. {\it Monatsh. Math.} {\bf 129} (2000), 89--96.

\bibitem{Bowen}
R.~Bowen. Periodic points and measures for Axiom A diffeomorphisms. {\it Trans. Amer. Math. Soc.} {\bf 154} (1971), 377--397.

%\bibitem{CL}
%V.~Capraro and M.~Lupini. {\it Introduction to sofic and hyperlinear groups and Connes' embedding conjecture}. With an appendix by V.~Pestov. Lecture Notes in Mathematics, 2136. %Springer, Cham, 2015.


\bibitem{CC10}
T.~Ceccherini-Silberstein and M.~Coornaert. {\it Cellular Automata and Groups}. Springer Monographs in Mathematics. Springer-Verlag, Berlin, 2010.

\bibitem{CC12}
T.~Ceccherini-Silberstein and M.~Coornaert. The Myhill property for strongly irreducible subshifts over amenable groups. {\it Monatsh. Math.} {\bf 165} (2012), no. 2, 155--172.

\bibitem{CC15}
T.~Ceccherini-Silberstein and M.~Coornaert. Expansive actions of countable amenable groups, homoclinic pairs, and the Myhill property. {\it Illinois J. Math.} {\bf 59} (2015), no. 3, 597--621.

\bibitem{CC16}
T.~Ceccherini-Silberstein and M.~Coornaert. A garden of Eden theorem for Anosov diffeomorphisms on tori. {\it Topology Appl.} {\bf 212} (2016), 49--56.

\bibitem{CC17a}
T.~Ceccherini-Silberstein and M.~Coornaert. A Garden of Eden theorem for principal algebraic actions. arXiv:1706.06548

\bibitem{CC17b}
T.~Ceccherini-Silberstein and M.~Coornaert.	The Garden of Eden theorem: old and new. arXiv:1707.08898. To appear in: {\it Handbook of Group Actions}, L.~Ji, A.~Paradopoulos, and S.-T. Yau Eds., Advanced Lectures in Mathematics, International Press. 

\bibitem{CMS}
T.~Ceccherini-Silberstein, A.~Mach\`{\i}, and F.~Scarabotti. Amenable groups and cellular automata. {\it Ann. Inst. Fourier (Grenoble)} {\bf 49} (1999), no. 2, 673--685.

\bibitem{CL}
N.~Chung and H.~Li. Homoclinic groups, IE groups, and expansive algebraic actions. {\it Invent. Math.} {\bf 199} (2015), no. 3, 805--858.

\bibitem{Den}
C.~Deninger. Fuglede-Kadison determinants and entropy for actions of discrete amenable groups. {\it J. Amer. Math. Soc.} {\bf 19} (2006), 737--758.

\bibitem{DS}
C.~Deninger and K.~Schmidt. Expansive algebraic actions of discrete
residually finite amenable groups and their entropy.
{\it Ergod. Th. Dynam. Sys.} {\bf 27} (2007), 769--786.

\bibitem{Fiorenzi00}
F.~Fiorenzi. The Garden of Eden theorem for sofic shifts. {\it Pure Math. Appl.} {\bf 11} (2000), no. 3, 471--484.

\bibitem{Fiorenzi03}
F.~Fiorenzi. Cellular automata and strongly irreducible shifts of finite type. {\it Theoret. Comput. Sci.} {\bf 299} (2003), no. 1-3, 477--493.

%\bibitem{GW}
%E.~Glasner and B.~Weiss. Topological entropy of extensions. In: {\it Ergodic Theory and its Connections with Harmonic Analysis (Alexandria, 1993)}, 299--307, London Math. Soc. %Lecture Note Ser., 205, Cambridge Univ. Press, Cambridge, 1995.

\bibitem{Got}
W.~Gottschalk. Some general dynamical notions. In: {\it Recent Advances in Topological Dynamics}, pp. 120--125.
Lecture Notes in Math., 318, Springer, Berlin, 1973.

\bibitem{Gromov}
M.~Gromov. Endomorphisms of symbolic algebraic varieties. {\it J. Eur. Math. Soc.} {\bf 1} (1999), no. 2, 109--197.


%\bibitem{KL13}
%D.~Kerr and H.~Li. Soficity, amenability, and dynamical entropy. {\it Amer.\ J. Math.} {\bf 135} (2013), no. 3, 721--761.

\bibitem{KL13b}
D.~Kerr and H.~Li. Combinatorial independence and sofic entropy. {\it Commun. Math. Stat.} {\bf 1} (2013), no. 2, 213--257.


\bibitem{KL}
D.~Kerr and H.~Li. {\it Ergodic Theory: Independence and Dichotomies}. Springer Monographs in Mathematics. Springer, Cham, 2016.

%\bibitem{Li}
%H.~Li. Compact group automorphisms, addition formulas and Fuglede-Kadison determinants. {\it Ann. of Math. (2)} {\bf 176} (2012), no. 1, 303--347.

\bibitem{LS}
D.~Lind and K.~Schmidt. Homoclinic points of algebraic $\Zb^d$-actions.
{\it J. Amer. Math. Soc.}  {\bf 12}  (1999),  no. 4, 953--980.

\bibitem{MM}
A.~Mach\`{\i} and F.~Mignosi. Garden of Eden configurations for cellular automata on Cayley graphs of groups. {\it SIAM J. Discrete Math.} {\bf 6} (1993), no. 1, 44--56.

\bibitem{Moore}
E.~F.~Moore. Machine models of self-reproduction. {\it Proc. Sympos. Appl. Math.}, Vol. {\bf 14}, pp. 17--34, Amer. Math. Soc., Providence, R. I., 1963.


\bibitem{MO}
J.~Moulin Ollagnier. {\it Ergodic Theory and Statistical Mechanics.}
Lecture Notes in Math., 1115. Springer, Berlin, 1985.


\bibitem{Myhill}
J.~Myhill. The converse of Moore's Garden-of-Eden theorem. {\it Proc. Amer. Math. Soc.} {\bf 14} (1963), 685--686.

\bibitem{Ren}
X.~Ren. Periodic measures are dense in invariant measures for residually finite amenable group actions with specification. arXiv:1509.09202.  {\it Discrete Contin. Dyn. Syst.} to appear.

\bibitem{Ruelle}
D.~Ruelle. Statistical mechanics on a compact set with $Z^{v}$ action satisfying expansiveness and specification.
{\it Trans. Amer. Math. Soc.} {\bf 187} (1973), 237--251.

\bibitem{Schmidt}
K.~Schmidt. {\it Dynamical Systems of Algebraic Origin}.
Progress in Mathematics, 128. Birkh\"{a}user Verlag, Basel, 1995.

\bibitem{Walters}
P.~Walters. {\it An Introduction to Ergodic Theory.} Graduate Texts in
Mathematics, 79. Springer-Verlag, New York, Berlin, 1982.

\bibitem{Weiss}
B.~Weiss. Sofic groups and dynamical systems. In: {\it Ergodic Theory and Harmonic Analysis (Mumbai, 1999)}.
{\it Sankhy\={a} Ser. A} {\bf 62} (2000), 350--359.

\end{thebibliography}
\end{document}